\tikzset{
	mid arrow/.style={
		postaction={
			decorate,
			decoration={
				markings,
				mark=at position 0.8 with {
					\arrow{Latex[length=4mm, width=3mm]}
				}
			}
		}
	}
}
\newtheorem{theorem}{Theorem}
\newtheorem{conj}[theorem]{Conjecture}
\newtheorem{cor}[theorem]{Corollary}
\newtheorem{obs}[theorem]{Observation}
\newtheorem{remark}{Remark}
\newtheorem{defi}[theorem]{Definition}
\newtheorem{prop}[theorem]{Proposition}
\title{On Link-irregular Digraphs} 
\author{
Alexander Bastien
\qquad
Omid Khormali\thanks{Corresponding author. Email: \texttt{ok16@evansville.edu}} \\
\small Department of Mathematics\\[-0.8ex]
\small University of Evansville\\[-0.8ex]
\small Evansville, Indiana 47722, USA.\\
\small \texttt{ab995@evansville.edu}\\
\small \texttt{ok16@evansville.edu}
}
\begin{document}

\maketitle

\begin{abstract}
\noindent We extend the study of link-irregular graphs to directed graphs (digraphs), where a digraph is link-irregular if no two vertices have isomorphic directed links. We establish that link-irregular digraphs exist on $n$ vertices if and only if $n \geq 5$, and prove that their underlying graphs must contain 3-cycles. We conjecture that link-irregular tournaments exist if and only if $n \geq 6$, providing explicit constructions for $n \leq 8$ and computational verification for $n \leq 100$. We derive lower bounds on the minimum degree and outdegree required for link-irregularity, establish that almost all link-irregular digraphs are nonplanar, and prove that any link-irregular orientable graph admits a link-irregular labeling. Additionally, we construct explicit examples of link-irregular digraphs with constant outdegree and regular tournaments.
\end{abstract}

\medskip
\textbf{Keywords:} Link-irregular digraphs, tournaments, directed graphs, graph irregularity, link graphs, regular orientations, graph labeling.\\

\textbf{2020 Mathematics Subject Classification:} 05C20, 05C35, 05C78.

\section{Introduction}

In this paper, we follow standard terminology and notation in graph theory as presented in West's textbook \cite{west}. For a graph $G$ with vertex set $V(G)$ and edge set $E(G)$, the number of vertices and edges in a graph $G$ are denoted by $n(G)$ and $e(G)$, respectively. The degree of a vertex $u \in V(G)$, $d(u)$, is the number of edges connected to vertex $u$. And $D(G)$ is the set of degrees of the vertices of graph $G$. A graph is irregular if no two vertices in the graph have the same degree. It is known that no such graph exists because there is no simple graph in which all vertex degrees are distinct. Ali, Chartrand, and Zhang~\cite{akbar_book} introduced the notions of the \emph{link} of a vertex, \emph{link-regular} graphs, and \emph{link-irregular} graphs. The \emph{link} $L(v)$ of a vertex $v$ in a graph $G$ is defined as the subgraph induced by the neighborhood of $v$; that is, $L(v) = G[N(v)]$. A graph $G$ is \emph{link-irregular} if every pair of distinct vertices has non-isomorphic links; that is, $L(u) \ncong L(v)$ for all distinct $u, v \in V(G)$. \\

Recently, Bastien and Khormali in \cite{alexander} obtained the results: there are no bipartite or regular link-irregular graphs for small orders, and only finitely many link-irregular graphs are planar. Also, they disprove a conjecture that no regular link-irregular graphs exist, through explicit and probabilistic constructions. \\

Also, Bastien and Khormali in \cite{alexander2} introduced link-irregular labelings for undirected graphs by assigning positive integer labels to edges, effectively modeling loopless multigraphs while working within the framework of simple graphs. This allowed them to distinguish each vertex by the labeled subgraph induced by its neighbors. They characterized the existence of such labelings, defined the link-irregular labeling number, and showed that while families like bipartite graphs and cycles do not admit such labelings, complete and wheel graphs do. They also proved that any labeling number can be realized by some graph. \\

In this paper, we extend the study to directed graphs, examining how directionality impacts link-irregularity. The directed \emph{link} $L_D(v)$ of a vertex $v$ in a digraph $D$ is defined as the subgraph induced by the neighborhood of $v$; that is, $L_D(v) = D[N^+(v)\cup N^-(v)]$ 
where $N^+(v)$ denotes the set of vertices with arcs from $v$ (out-neighbors), and $N^-(v)$ denotes the set of vertices with arcs to $v$ (in-neighbors). 
Note that when $D$ is understood to be a digraph, we will simply write $L(v)$ for the directed link. A digraph $D$ is \emph{link-irregular} if every pair of distinct vertices has non-isomorphic directed links; that is, $L_D(u) \ncong L_D(v)$ for all distinct $u, v \in V(G)$.\\

In the following, we investigate the structural properties of the labeled link-irregular graphs.

\section{Results}

By a straightforward observation, we obtain the following result.

\begin{obs}
    If the underlying undirected graph of a digraph $D$ is a tree, bipartite 
    graph, hypercube, or wheel graph, then $D$ is not link-irregular.
\end{obs}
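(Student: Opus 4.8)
The plan is to show that for each of these four graph families, there must exist two vertices whose directed links are isomorphic, regardless of how the edges are oriented. The key structural feature to exploit is that each family contains a large collection of vertices that are \emph{structurally equivalent} in the underlying graph — specifically, vertices with very small neighborhoods, or pairs of vertices whose neighborhoods induce isomorphic (indeed trivial) subgraphs. Since a link $L_D(v)$ is built only from the vertices adjacent to $v$ together with the arcs induced among them, if two vertices have underlying neighborhoods that induce edgeless subgraphs of the same size, then their directed links are both edgeless digraphs on the same number of vertices, hence trivially isomorphic.

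First I would handle the bipartite case, which subsumes trees and hypercubes. In a bipartite graph the neighborhood of any vertex lies entirely in the opposite part and therefore induces no edges; thus $L_D(v)$ is always an edgeless digraph on $d(v)$ vertices for every $v$. Two such links are isomorphic precisely when the underlying degrees agree, so it suffices to produce two vertices of equal degree. For trees this is immediate since any tree on at least two vertices has at least two leaves (degree-$1$ vertices), giving two links that are each a single isolated vertex. For hypercubes every vertex has the same degree $k$ in $Q_k$, so all links are edgeless on $k$ vertices and are mutually isomorphic. For a general bipartite graph I would invoke the classical pigeonhole fact that no simple graph has all distinct degrees, which forces two vertices of equal degree and hence isomorphic edgeless links.

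For the wheel graph $W_n$ (a cycle $C_{n}$ plus a hub joined to all rim vertices) the argument is slightly different, since rim vertices do have adjacent neighbors. Here I would compare two rim vertices. Each rim vertex $v$ is adjacent exactly to its two cycle-neighbors on the rim and to the hub; among these three neighbors the only underlying edges are the two rim edges incident to $v$'s neighbors through the hub, so the underlying link of every rim vertex is an isomorphic copy of the same small graph (a path or triangle depending on convention). Because the rim has at least three vertices and the underlying neighborhood structure is identical for all of them, a counting argument on the finitely many orientations of this fixed small link graph shows that two rim vertices must receive isomorphic directed links once $n$ is large enough; for the small remaining cases I would check directly.

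The main obstacle I anticipate is the wheel case, because there the neighborhoods are not edgeless and the orientations genuinely matter, so isomorphism of links is not automatic from equal degree. The clean way around this is to observe that all rim links share one fixed underlying graph $H$, that there are only finitely many non-isomorphic orientations of $H$, and that the hub's role is symmetric across all rim vertices; then a pigeonhole argument over the rim vertices (whose number grows with $n$) against the bounded number of orientation-types of $H$ forces a repeated link-isomorphism type. I would state this pigeonhole bound explicitly and dispatch the finitely many small wheels by inspection, which is the one place where a short case check rather than a uniform argument is needed.
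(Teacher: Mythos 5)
Your proposal is correct and follows the same two-part strategy as the paper: edgeless links dispose of trees, hypercubes, and general bipartite graphs all at once, and a finiteness-of-orientations argument handles wheels. It is worth noting, though, that your version is tighter than the paper's at both steps. For the bipartite case the paper asserts that all vertices have ``identical (empty)'' directed links, which glosses over the fact that edgeless digraphs of different orders are not isomorphic; your extra step --- invoking the classical fact that no simple graph on at least two vertices has all distinct degrees, so that some two vertices have edgeless links of the \emph{same} order --- is exactly what is needed to close that gap (and it is the same fact the paper itself records as Proposition~\ref{noperfect} and uses only later, for the 3-cycle observation). For wheels, the paper handles $W_3$ and $W_4$ by ad hoc symmetry claims and dismisses $W_n$, $n \geq 5$, with a vague counting assertion; your plan makes the pigeonhole explicit: every rim vertex of $W_n$ with $n \geq 4$ has underlying link $P_3$, which admits only three orientations up to isomorphism (directed path, in-star, out-star), so having at least four rim vertices forces two isomorphic directed links, leaving only $W_3 = K_4$ for direct inspection, where four vertices whose links fall into the two orientation classes of $K_3$ again force a repetition. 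One minor slip in wording: the edges inside a rim vertex's neighborhood are the two \emph{spokes} joining its rim neighbors to the hub, not ``rim edges,'' but this does not affect the structure you identify or the validity of the argument.
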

\begin{proof}
For tree, hypercube and bipartite graphs, since the underlying graphs are all link-empty (for each vertex, the induced subgraph on its neighbors is empty), the corresponding directed links in the digraph are also empty. As a result, all vertices in $D$ have identical (empty) directed link subgraphs, and thus $D$ cannot be link-irregular.\\

For wheels, consider the smallest wheel, $W_3$, which consists of a triangle (3-cycle) and a central vertex connected to each vertex of the cycle. In this case, the underlying links (ignoring direction) of the three cycle vertices are each isomorphic to the path $P_3$. Each pair of these links shares a common edge, and any assignment of directions to the arcs will necessarily result in at least two directed links being isomorphic. Therefore, no orientation of $W_3$ yields a link-irregular digraph.\\

Now consider $W_4$, which contains a 4-cycle and a central hub vertex. Any orientation of $W_4$ results in two non-adjacent cycle vertices will have the same induced directed link (up to isomorphism) due to the graph's symmetry. So the digraph cannot be link-irregular under any orientation.\\

More generally, for $W_n$ with $n \geq 5$, although the number of distinct links increases, the number of possible orientations of links remains limited. Since each edge has only two possible directions, the number of distinct induced directed links is smaller than the number of vertices. Furthermore, the structural symmetry of the wheel graph ensures that multiple vertices will have isomorphic neighborhoods, and hence isomorphic directed links. Therefore, for any orientation of $W_n$, there will exist at least two vertices with isomorphic directed links, and the digraph is not link-irregular.
\end{proof}

Before we move on to our next result, we require the following result from \cite{behzad}.

\begin{prop} \cite{behzad} \label{noperfect}
    If $G$ is a graph, then there exist two vertices $u, v \in V(G)$ such that $d(u) = d(v)$.
\end{prop}
We are now ready to state the following observation.
\begin{obs}
    If a digraph $D$ is link-irregular, then its underlying graph contains a 3-cycle.
\end{obs}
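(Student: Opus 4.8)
The plan is to prove the contrapositive: if the underlying graph $G$ of $D$ contains no $3$-cycle, then $D$ is not link-irregular. The engine of the argument is the same degree-repetition phenomenon already recorded in Proposition~\ref{noperfect}, which I intend to transfer from $G$ to the directed links of $D$. First I would fix notation by recording that for every vertex $v$ the vertex set of the directed link, $N^+(v)\cup N^-(v)$, coincides exactly with the underlying neighborhood $N_G(v)$, so that $|V(L_D(v))| = d_G(v)$.

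The crucial structural step is to show that triangle-freeness forces every directed link to be \emph{arcless}. Suppose two neighbors $u,w \in N_G(v)$ were joined by an arc of $D$ (in either direction). Then in the underlying graph $u \sim w$ together with $u \sim v$ and $w \sim v$ would exhibit a $3$-cycle on $\{u,v,w\}$, contradicting the assumption. Hence no two vertices of $N_G(v)$ are adjacent in $D$, and the induced sub-digraph $L_D(v) = D[N^+(v)\cup N^-(v)]$ has no arcs at all. Consequently each directed link is determined, up to isomorphism, purely by its number of vertices: two arcless digraphs are isomorphic precisely when they have the same order, so $L_D(u) \cong L_D(v)$ if and only if $d_G(u) = d_G(v)$.

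With this reduction in hand the conclusion is immediate. Applying Proposition~\ref{noperfect} to the underlying graph $G$ yields two distinct vertices $u,v$ with $d_G(u) = d_G(v)$. By the previous paragraph their directed links are arcless digraphs of equal order, hence isomorphic, so $D$ is not link-irregular. This proves the contrapositive and therefore the statement.

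I do not expect a genuine obstacle here, since the argument is short once the key observation is isolated; the only point requiring care is the reduction step, namely verifying cleanly that (i) the vertex set of $L_D(v)$ really is the full underlying neighborhood $N_G(v)$ rather than a proper subset, and (ii) that triangle-freeness rules out arcs in \emph{both} orientations between neighbors of $v$, so that the link is literally arcless and its isomorphism type collapses to a single integer invariant. Getting those two bookkeeping details exactly right is what lets Proposition~\ref{noperfect} do the remaining work.
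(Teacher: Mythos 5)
Your proposal is correct and follows essentially the same route as the paper's proof, merely phrased contrapositively: both arguments combine Proposition~\ref{noperfect} (two vertices of equal degree must exist) with the observation that an arc inside a link corresponds to a triangle in the underlying graph, forcing two equal-degree vertices to have isomorphic (arcless) links when the graph is triangle-free. Your version spells out the bookkeeping (that $V(L_D(v)) = N_G(v)$ and that arcless links are classified by order) a bit more carefully than the paper does, but the underlying idea is identical.
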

\begin{proof}
    Suppose $D$ is a link-irregular digraph. By Proposition \ref{noperfect}, $D$ has two vertices with the same degree. Call this common degree $d$. Since $D$ is link-irregular, these cannot both have $\overline{K}_d$ as their links, hence $L(u)$ or $L(v)$ has an edge, so this makes a triangle in the underlying graph. This completes the proof. 
\end{proof}

The following result is about the existence of link-irregular digraphs on a given number of vertices.

\begin{theorem} \label{existencethm}
There exists a link-irregular digraph $D$ with $|D| = n$ if and only if $n \geq 5$.
\end{theorem}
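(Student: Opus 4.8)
The plan is to prove both implications separately: the necessity that $n \ge 5$ (equivalently, that no link-irregular digraph exists for $n \le 4$), and the sufficiency, namely an explicit construction for every $n \ge 5$. For the necessity direction the cases $n \le 2$ contain no triangle and are excluded by the preceding observation, since a digraph whose underlying graph contains no $3$-cycle cannot be link-irregular.

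For $n = 3$, the underlying graph must contain a triangle, hence equals $K_3$, so every vertex has the remaining two vertices as its neighborhood and each directed link is an induced subdigraph on exactly two vertices. Up to isomorphism there are only two such links whose underlying pair is adjacent --- a single arc and a digon --- so three vertices force two isomorphic links by pigeonhole. The case $n = 4$ is the delicate part of this direction: I would first invoke Proposition \ref{noperfect} to locate two vertices $u,v$ of equal underlying degree $d$, and then argue by a finite enumeration. Each link lives on at most three vertices, and the number of isomorphism types of directed links realizable on $\le 3$ vertices is small (the empty link, a single vertex, the three $2$-vertex types, and the handful of $3$-vertex oriented and mixed digraphs). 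Combining this bound with the triangle constraint, which forces three of the four vertices to have degree $\ge 2$, and with the equal-degree pair supplied by Proposition \ref{noperfect}, one checks degree sequence by degree sequence --- $(2,2,2,0)$, $(3,2,2,1)$, $(3,3,2,2)$, $(3,3,3,3)$, and the remaining admissible cases --- that two links must always coincide.

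For sufficiency I would exhibit one explicit link-irregular digraph on $5$ vertices as a base case and then establish an inductive step that produces a link-irregular digraph on $n+1$ vertices from one on $n \ge 5$ vertices. The extension I have in mind adds a new vertex $w$ joined to every old vertex by an out-arc $w \to x$, so that $N^-(w) = \varnothing$ and $w$ becomes a universal source. Then $L(w)$ is the induced subdigraph on all old vertices, namely the original digraph $D$, while for each old vertex $x$ the new link is $L(x)$ with $w$ adjoined as a source dominating $N(x)$. Writing $H \mapsto H^{+}$ for the operation of adjoining a universal source, the key points are that $H \mapsto H^{+}$ is injective on isomorphism classes whenever the added source is canonically recoverable, and that $L(w) \cong D$ differs from every $L(x)^{+}$; both reduce to the invariant that no link of $D$ is itself a universal-source digraph, which I would verify for the base case and preserve inductively.

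The main obstacle will be this inductive step rather than the finite elimination at $n \le 4$. Adding $w$ perturbs the link of \emph{every} old vertex simultaneously, so I must ensure that the perturbation neither merges two previously distinct links nor makes the new link $L(w)$ coincide with a modified old one. The crux is therefore identifying the right structural invariant --- essentially that the adjoined source remains the unique vertex of in-degree $0$ and maximal out-degree inside each modified link --- and proving that it is inherited from $D$ to the extended digraph; once that recoverability is in hand, injectivity of the link map follows and the induction closes.
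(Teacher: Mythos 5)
Your necessity argument ($n \le 4$) is essentially the paper's own finite check and is sound \emph{provided} digraphs are taken to be orientations of simple graphs, a convention the paper uses implicitly (its $K_4$ case counts ``only two distinct orientations of $K_3$''). Be careful that your explicit inclusion of digons and ``mixed digraphs'' breaks the $n=4$ elimination if taken literally: on underlying $K_4$ with digons $1\leftrightarrow 2$ and $2\leftrightarrow 4$ and single arcs $1\to 3$, $2\to 3$, $3\to 4$, $4\to 1$, the four links are a directed $3$-cycle, a two-digon-plus-arc digraph, and two one-digon digraphs with different degree-pair multisets --- pairwise non-isomorphic, so a link-irregular ``digraph'' on $4$ vertices exists once digons are allowed. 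So your case check only closes under the no-digon convention, and you should state it.

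The genuine gap is in your sufficiency induction, and it is fatal as proposed. One application of your universal-source extension is fine --- indeed your worry about recoverability is unnecessary, since in $H^{+}$ the added source is the unique vertex of in-degree $0$ (every old vertex receives an arc from it), so $H \mapsto H^{+}$ is injective on isomorphism classes with no extra hypothesis. The problem is that the invariant you need to separate $L(w)$ from the modified old links is destroyed by the construction itself: in $D' = D^{+}$ the new vertex $w$ has full underlying degree and $L_{D'}(w) = D$, so when you extend again by a new source $w'$ you get $L_{D''}(w') = D'$ while $L_{D''}(w) = \left(L_{D'}(w)\right)^{+} = D^{+} = D'$, i.e., the two added vertices acquire isomorphic links and the induction collapses at the second step. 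No sharper invariant can rescue this particular step, since $D^{+}$ always contains a universal source; even alternating source and sink fails, as the paper itself records in its tournament discussion (the collision between the vertices added to form $D_9$ and $D_7$). The paper avoids induction entirely: for $n = 5$ it gives explicit examples, and for $n \ge 6$ it cites the existence of link-irregular \emph{undirected} graphs on $n$ vertices (Ali--Chartrand--Zhang) and observes that any orientation of such a graph is a link-irregular digraph, because a digraph isomorphism between two links would induce a graph isomorphism of the underlying links. To salvage your route you would need a fundamentally different extension step, not a better-preserved invariant.
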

\begin{proof}
    If $D$ has $4$ or fewer vertices, we simply check all possible underlying graphs of $D$ to verify that $D$ cannot be link-irregular. If $D$ has two vertices, these vertices both either have the graph with no vertices or $K_1$ as their link. In either case, they both have the same link. If $D$ has three vertices, there are either two vertices of degree $0$, two vertices of degree $1$, or three vertices of degree $2$. In any case, there must be two vertices with the same link. If $D$ has four vertices, we see through observing all possible underlying graphs that each graph has two vertices of degree $0$, two vertices of degree $1$, two vertices of degree $2$, or four vertices of degree $3$. In the first three cases, these two vertices have isomorphic links. In the third case, $D$ is an orientation of $K_4$. Since there are only two distinct orientations of $K_3$ but there are four vertices with link $K_3$, some two vertices must have the same link. Hence $D$ is not link-irregular.\\
    
    Now we show that there exists a link-irregular digraph on $n$ vertices for $n \geq 5$. If $n=5$, 
    the link irregular graphs on 5 vertices are shown in Figure \ref{tournament5}.
    If $n \geq 6$, then we use the result from \cite{akbar_paper} that says that there is a link-irregular graph on $n$ vertices for $n \geq 6$. It is evident that any orientation of a link-irregular graph is a link-irregular digraph, so this proves our result.
\end{proof}

In \cite{akbar_paper}, Ali, Chartrand, and Zhang claimed that there exists a unique link-irregular graph on a minimum number of vertices. That is, they stated that there is only one link-irregular graph on $6$ vertices. This might lead the reader to ask uniqueness questions regarding link-irregular digraphs. In particular, the reader might ask whether there is any number $n$ such that there is only one link-irregular digraph on $n$ vertices. It turns out that there is no such number $n$. We state this in our observation.

\begin{obs}
    For any number $n=1,2,3 \cdots$, there is either no link-irregular digraph on $n$ vertices, or there is more than one link-irregular digraph on $n$ vertices.
\end{obs}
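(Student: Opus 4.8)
The plan is to prove this dichotomy---that there is never \emph{exactly} one link-irregular digraph on $n$ vertices---by splitting on $n$ and combining Theorem~\ref{existencethm} with two separate multiplicity arguments for $n=5$ and $n\ge 6$. First, for $n\le 4$ there is nothing to do: by Theorem~\ref{existencethm} no link-irregular digraph on $n$ vertices exists, so the first alternative (``no link-irregular digraph'') holds automatically. Hence it remains to show that in every range where a link-irregular digraph exists, namely $n\ge 5$, there are in fact at least two pairwise non-isomorphic ones, which establishes the second alternative.

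For $n\ge 6$ I would argue through orientations. By the result of Ali, Chartrand, and Zhang in \cite{akbar_paper} there is a link-irregular undirected graph $G$ on $n$ vertices. The key observation is that $G$ is \emph{rigid}: any automorphism $\phi$ of $G$ restricts to an isomorphism $L_G(v)\cong L_G(\phi(v))$, and since link-irregularity forces $L_G(v)\cong L_G(w)$ only when $v=w$, we conclude $\phi(v)=v$ for every $v$, so $\operatorname{Aut}(G)$ is trivial. Now $G$ certainly has an edge (an edgeless graph on $\ge 2$ vertices has all links empty, hence isomorphic), so $G$ admits two orientations $D_1\neq D_2$ that differ in the direction of a single edge. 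Both are link-irregular, since any orientation of a link-irregular graph is link-irregular, as already used in the proof of Theorem~\ref{existencethm}. Finally $D_1$ and $D_2$ are non-isomorphic: any digraph isomorphism between them would be an automorphism of their common underlying graph $G$, hence the identity by rigidity, contradicting $D_1\neq D_2$. This yields at least two link-irregular digraphs---in fact all $2^{e(G)}$ orientations lie in distinct isomorphism classes---settling the second alternative for all $n\ge 6$.

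The case $n=5$ must be handled on its own, and this is the main obstacle: there is no link-irregular undirected graph on $5$ vertices, so the orientation argument above is unavailable and every link-irregular digraph on $5$ vertices is genuinely directed. Here I would appeal directly to the explicit constructions, observing that Figure~\ref{tournament5} already displays more than one pairwise non-isomorphic link-irregular digraph on $5$ vertices, which discharges the second alternative. If one preferred an argument-based route, one could instead take a single link-irregular digraph $D$ on $5$ vertices and pair it with its converse $D^{\mathrm{rev}}$, which is again link-irregular because reversing all arcs reverses each link and reversal preserves non-isomorphism of links; the remaining difficulty would then be to rule out, or otherwise work around, $D$ being self-converse. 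Combining the three ranges $n\le 4$, $n=5$, and $n\ge 6$ gives the stated dichotomy.
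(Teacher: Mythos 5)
Your proposal is correct and follows essentially the same route as the paper's proof: the same three-way split ($n\le 4$ nonexistence, $n=5$ via the explicit examples of Figure~\ref{tournament5}, $n\ge 6$ via orientations of a link-irregular graph made pairwise non-isomorphic by rigidity). The only differences are cosmetic: you spell out why link-irregularity forces a trivial automorphism group (which the paper merely asserts), and you get two orientations from the elementary fact that $G$ has an edge, whereas the paper cites the $2n-5$ edge bound from \cite{alexander} to count $2^{2n-5}$ orientations.
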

\begin{proof}
    The result is clear for $n < 5$. Suppose $n = 5$. Then the two following digraphs (which happen to be non-isomorphic orientations of the same underlying graph) are distinct link-irregular digraphs on $n$ vertices. 

\begin{figure}[H]
		\centering
		\begin{tabular}{c@{\hspace{2cm}}c}
			\begin{tikzpicture}[scale=1,
				midarrow/.style={
					decoration={markings,
						mark=at position 0.45 with {\arrow{Stealth[length=6mm, width=2.5mm]}}
					},
					postaction={decorate}
				}
				]
				
				\node[circle, fill=black, inner sep=2.5pt, label=below:1] (v1) at (1.5,-1.2) {};
				\node[circle, fill=black, inner sep=2.5pt, label=below:2] (v2) at (-1.5,-1.2) {};
				\node[circle, fill=black, inner sep=2.5pt, label=left:3]  (v3) at (-2,0) {};
				\node[circle, fill=black, inner sep=2.5pt, label=above:4] (v4) at (0,1.2) {};
				\node[circle, fill=black, inner sep=2.5pt, label=right:5] (v5) at (2,0) {};
				
				\draw[very thick, midarrow] (v3) -- (v4);
				\draw[very thick, midarrow] (v4) -- (v5);
				\draw[very thick, midarrow] (v2) -- (v3);
				\draw[very thick, midarrow] (v5) -- (v1);
				\draw[very thick, midarrow] (v5) -- (v3);
				\draw[very thick, midarrow] (v4) -- (v1);
				\draw[very thick, midarrow] (v2) -- (v4);
				\draw[very thick, midarrow] (v2) -- (v5);
				
			\end{tikzpicture}
			&
			\begin{tikzpicture}[scale=1,
				midarrow/.style={
					decoration={markings,
						mark=at position 0.45 with {\arrow{Stealth[length=6mm, width=2.5mm]}}
					},
					postaction={decorate}
				}
				]
				
				\node[circle, fill=black, inner sep=2.5pt, label=below:1] (v1) at (1.5,-1.2) {};
				\node[circle, fill=black, inner sep=2.5pt, label=below:2] (v2) at (-1.5,-1.2) {};
				\node[circle, fill=black, inner sep=2.5pt, label=left:3]  (v3) at (-2,0) {};
				\node[circle, fill=black, inner sep=2.5pt, label=above:4] (v4) at (0,1.2) {};
				\node[circle, fill=black, inner sep=2.5pt, label=right:5] (v5) at (2,0) {};
				
				\draw[very thick, midarrow] (v3) -- (v4);
				\draw[very thick, midarrow] (v4) -- (v5);
				\draw[very thick, midarrow] (v2) -- (v3);
				\draw[very thick, midarrow] (v1) -- (v5);
				\draw[very thick, midarrow] (v5) -- (v3);
				\draw[very thick, midarrow] (v4) -- (v1);
				\draw[very thick, midarrow] (v2) -- (v4);
				\draw[very thick, midarrow] (v2) -- (v5);
				
			\end{tikzpicture}
			\ \\[10pt]
			\begin{tikzpicture}[scale=1,
				midarrow/.style={
					decoration={markings,
						mark=at position 0.6 with {\arrow{Stealth[length=3mm,width=2mm]}}},
					postaction={decorate}
				}
				]
				
				\node at (-1,4) { \textbf{Links:}};
				
				\node at (-1,3) {1.};
				\begin{scope}[shift={(0.5,3)}]
					\node[circle,fill=black,inner sep=2pt] (a1) at (0,0) {};
					\node[circle,fill=black,inner sep=2pt] (a2) at (1.0,0) {};
					\draw[very thick,midarrow] (a1)--(a2);
				\end{scope}
				
				\node at (-1,2) {2.};
				\begin{scope}[shift={(0.5,2)}]
					\node[circle,fill=black,inner sep=2pt] (b1) at (0,0) {};
					\node[circle,fill=black,inner sep=2pt] (b2) at (1.2,0) {};
					\node[circle,fill=black,inner sep=2pt] (b3) at (0.6,0.8) {};
					\draw[very thick,midarrow] (b1)--(b3);
					\draw[very thick,midarrow] (b3)--(b2);
					\draw[very thick,midarrow] (b2)--(b1);
				\end{scope}
				
				\node at (-1,1) {3.};
				\begin{scope}[shift={(0.5,1)}]
					\node[circle,fill=black,inner sep=2pt] (c1) at (0,0) {};
					\node[circle,fill=black,inner sep=2pt] (c2) at (1.2,0) {};
					\node[circle,fill=black,inner sep=2pt] (c3) at (0.6,0.8) {};
					\draw[very thick,midarrow] (c1)--(c3);
					\draw[very thick,midarrow] (c3)--(c2);
					\draw[very thick,midarrow] (c1)--(c2);
				\end{scope}
				
				\node at (-1,0) {4.};
				\begin{scope}[shift={(0.5,0)}]
					\node[circle,fill=black,inner sep=2pt] (d1) at (0,0) {};
					\node[circle,fill=black,inner sep=2pt] (d2) at (1.2,0) {};
					\node[circle,fill=black,inner sep=2pt] (d3) at (0.6,0.8) {};
					\node[circle,fill=black,inner sep=2pt] (d4) at (2.4,0) {};
					\draw[very thick,midarrow] (d3)--(d1);
					\draw[very thick,midarrow] (d3)--(d2);
					\draw[very thick,midarrow] (d2)--(d1);
					\draw[very thick,midarrow] (d2)--(d4);
				\end{scope}
				
				\node at (-1,-1) {5.};
				\begin{scope}[shift={(0.5,-1)}]
					\node[circle,fill=black,inner sep=2pt] (e1) at (0,0) {};
					\node[circle,fill=black,inner sep=2pt] (e2) at (1.2,0) {};
					\node[circle,fill=black,inner sep=2pt] (e3) at (0.6,0.8) {};
					\node[circle,fill=black,inner sep=2pt] (e4) at (2.4,0) {};
					\draw[very thick,midarrow] (e1)--(e3);
					\draw[very thick,midarrow] (e3)--(e2);
					\draw[very thick,midarrow] (e1)--(e2);
					\draw[very thick,midarrow] (e2)--(e4);
				\end{scope}
				
			\end{tikzpicture}
			
			& 
			\begin{tikzpicture}[scale=1,
				midarrow/.style={
					decoration={markings,
						mark=at position 0.6 with {\arrow{Stealth[length=3mm,width=2mm]}}},
					postaction={decorate}
				}
				]
				
				\node at (-1,4) { \textbf{Links:}};
				
				\node at (-1,3) {1.};
				\begin{scope}[shift={(0.5,3)}]
					\node[circle,fill=black,inner sep=2pt] (a1) at (0,0) {};
					\node[circle,fill=black,inner sep=2pt] (a2) at (1.0,0) {};
					\draw[very thick,midarrow] (a1)--(a2);
				\end{scope}
				
				\node at (-1,2) {2.};
				\begin{scope}[shift={(0.5,2)}]
					\node[circle,fill=black,inner sep=2pt] (b1) at (0,0) {};
					\node[circle,fill=black,inner sep=2pt] (b2) at (1.2,0) {};
					\node[circle,fill=black,inner sep=2pt] (b3) at (0.6,0.8) {};
					\draw[very thick,midarrow] (b1)--(b3);
					\draw[very thick,midarrow] (b3)--(b2);
					\draw[very thick,midarrow] (b2)--(b1);
				\end{scope}
				
				\node at (-1,1) {3.};
				\begin{scope}[shift={(0.5,1)}]
					\node[circle,fill=black,inner sep=2pt] (c1) at (0,0) {};
					\node[circle,fill=black,inner sep=2pt] (c2) at (1.2,0) {};
					\node[circle,fill=black,inner sep=2pt] (c3) at (0.6,0.8) {};
					\draw[very thick,midarrow] (c1)--(c3);
					\draw[very thick,midarrow] (c3)--(c2);
					\draw[very thick,midarrow] (c1)--(c2);
				\end{scope}
				
				\node at (-1,0) {4.};
				\begin{scope}[shift={(0.5,0)}]
					\node[circle,fill=black,inner sep=2pt] (d1) at (0,0) {};
					\node[circle,fill=black,inner sep=2pt] (d2) at (1.2,0) {};
					\node[circle,fill=black,inner sep=2pt] (d3) at (0.6,0.8) {};
					\node[circle,fill=black,inner sep=2pt] (d4) at (2.4,0) {};
					\draw[very thick,midarrow] (d3)--(d1);
					\draw[very thick,midarrow] (d3)--(d2);
					\draw[very thick,midarrow] (d2)--(d1);
					\draw[very thick,midarrow] (d4)--(d2);
				\end{scope}
				
				\node at (-1,-1) {5.};
				\begin{scope}[shift={(0.5,-1)}]
					\node[circle,fill=black,inner sep=2pt] (e1) at (0,0) {};
					\node[circle,fill=black,inner sep=2pt] (e2) at (1.2,0) {};
					\node[circle,fill=black,inner sep=2pt] (e3) at (0.6,0.8) {};
					\node[circle,fill=black,inner sep=2pt] (e4) at (2.4,0) {};
					\draw[very thick,midarrow] (e1)--(e3);
					\draw[very thick,midarrow] (e3)--(e2);
					\draw[very thick,midarrow] (e1)--(e2);
					\draw[very thick,midarrow] (e2)--(e4);
				\end{scope}
				
			\end{tikzpicture}
			
			\end{tabular}
		\caption{Two distinct link-irregular digraphs on 5 vertices.}
		\label{tournament5}
	\end{figure}
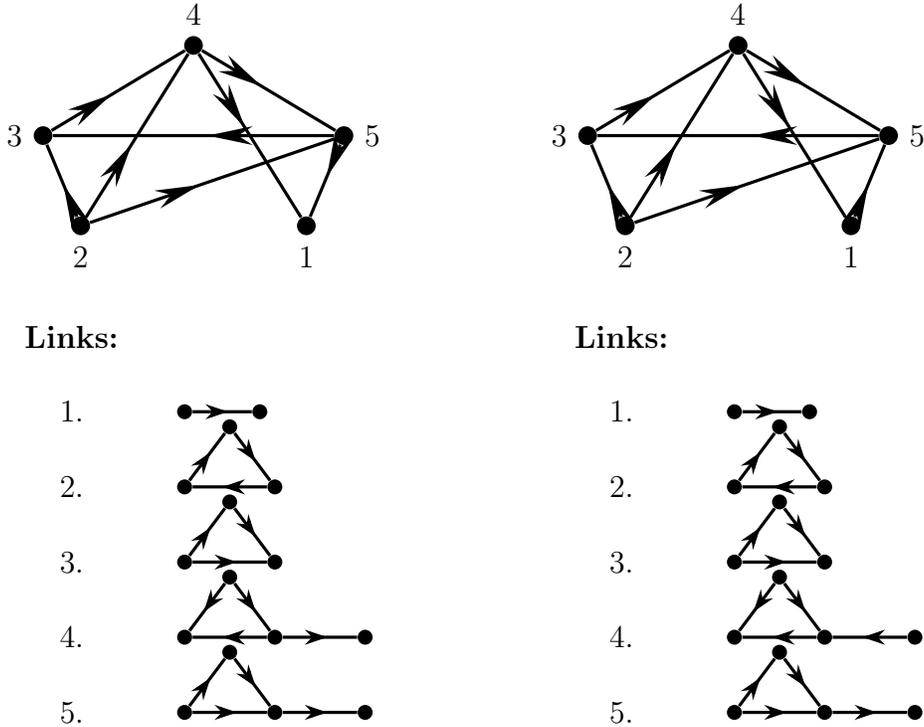
    
    Suppose that $n > 5$. Then by \cite{akbar_book}, there exists a link-irregular graph on $n$ vertices. Since any isomorphism of digraphs is an isomorphism of the underlying graphs and a link-irregular graph has no non-trivial automorphisms, no two orientations of a link-irregular graph are isomorphic. Since, by \cite{alexander}, any link-irregular graph on $n$ vertices has at least $2n-5 > 0$ edges, we have at least $2^{2n-5} > 1$ distinct link-irregular digraphs on $n$ vertices.
\end{proof}

In studying link-irregular digraphs, we explored the concept of a link-irregular tournament. In particular, from our explorations, we developed the following conjecture which has proven to be rather tricky to prove.

\begin{conj} \label{tournamentthm}
A link-irregular tournament exists on $n$ vertices if and only if $n\geq 6$.
\end{conj}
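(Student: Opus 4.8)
The plan is to treat the two directions separately, since they have very different flavors. First I would reformulate: in a tournament $T$ on $n$ vertices the underlying graph is complete, so $N^+(v)\cup N^-(v)=V(T)\setminus\{v\}$ and the directed link $L_T(v)$ is exactly the vertex-deleted subtournament $T-v$ on $n-1$ vertices. Hence $T$ is link-irregular if and only if the $n$ ``cards'' $T-v$ are pairwise non-isomorphic. The necessity (``only if'') direction is then an immediate counting argument: the cards are tournaments on $n-1$ vertices, and the number of isomorphism classes of tournaments on $m$ vertices is $1,1,2,4$ for $m=1,2,3,4$. For each $n\le 5$ this count (with $m=n-1$) is strictly smaller than $n$, so the $n$ cards cannot all be distinct; by pigeonhole two of them coincide and $T$ is not link-irregular. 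At $n=6$ the relevant count jumps to $56\ge 6$, so the obstruction disappears, consistent with the claimed threshold.

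The sufficiency (``if'') direction is the real content, and I would split it into small base cases plus a general argument. For $6\le n\le 8$ I would invoke the explicit tournaments already exhibited, and for the intermediate range I would rely on the computational verification up to $n=100$ quoted above. For all sufficiently large $n$ I would argue probabilistically. Let $T$ be a uniformly random tournament on $[n]$, and for distinct $u,v$ set $p_{uv}=\Pr[T-u\cong T-v]\le \sum_{\phi}\Pr[\phi\text{ is an isomorphism }T-u\to T-v]$, the sum ranging over the $(n-1)!$ bijections $V(T-u)\to V(T-v)$. The key structural observation is that $T-u$ and $T-v$ share the induced sub-tournament on the common vertex set $W=V(T)\setminus\{u,v\}$ of size $n-2$: a bijection $\phi$ that moves a pair inside $W$ to a different pair imposes a genuine fair-coin constraint, a $\phi$ that transposes a pair of $W$ satisfies its constraint with probability $0$, and a $\phi$ fixing $W$ pointwise with $v\mapsto u$ satisfies all $\binom{n-2}{2}$ internal constraints for free and leaves only the $n-2$ genuine constraints ``$w\to v\iff w\to u$.'' Grouping the $\phi$ by how many points of $W$ they move shows that the dominant contribution is this near-identity term of order $2^{-(n-2)}$, while the $(n-1)!=2^{\Theta(n\log n)}$ ``generic'' bijections each contribute $2^{-\Theta(n^2)}$ and are negligible. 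A union bound then gives $\sum_{u\neq v}p_{uv}\le\binom{n}{2}\,\mathrm{poly}(n)\,2^{-(n-2)}\to 0$, so a random tournament is link-irregular with high probability, and in particular one exists for every $n\ge N_0$.

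The hard part—and the reason the statement is still only a conjecture—is making the general argument uniform and closing the gap to the base cases. The probabilistic estimate shows existence for large $n$, but extracting an explicit threshold $N_0$ small enough to meet the verified range (ideally $N_0\le 101$) requires carefully summing the contributions of all the near-identity bijections, not merely the single leading one, and the resulting constant may well push $N_0$ beyond $100$, leaving a genuine gap. An appealing alternative is a direct inductive construction, passing from a link-irregular tournament on $n$ vertices to one on $n+1$ by adjoining a vertex $w$ and choosing its out-neighborhood so that the score sequences of the cards remain pairwise distinct—here one exploits that deleting $v$ decrements precisely the scores of the in-neighbors of $v$, so the multiset of scores of $T-v$ is a controllable invariant. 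The obstacle there is symmetric: one must simultaneously ensure that no two modified cards $T'-v$ become isomorphic \emph{and} that none becomes isomorphic to $T'-w=T$, and guaranteeing both for every $n$ at once is exactly where the difficulty concentrates.
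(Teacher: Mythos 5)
You should first be aware that the paper does not prove this statement: it is posed as a conjecture, supported only by explicit constructions for $6 \le n \le 8$, a pigeonhole argument excluding $n=5$, computational verification for $n \le 100$, and a remark sketching a possible attack via link-irregular $2$-labelings of $K_n$. Your reformulation is correct and clean: in a tournament the link of $v$ is the card $T-v$, so link-irregularity means the deck is pairwise non-isomorphic. Your ``only if'' direction is sound and essentially the paper's own argument: for $n \le 5$ the number of isomorphism classes of tournaments on $n-1$ vertices ($1,1,2,4$) is less than $n$, so two cards coincide. (Minor slip: at $n=6$ the relevant count is the number of tournaments on $5$ vertices, which is $12$, not $56$; the conclusion is unaffected.)

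The genuine gap is in the ``if'' direction, and you have diagnosed it yourself: your probabilistic argument, even if completed, shows only that link-irregular tournaments exist for all $n \ge N_0$ with an unquantified threshold $N_0$, and unless $N_0 \le 101$ it does not meet the computationally verified range, so the conjecture stays open. Moreover, the sketch itself needs repair before even that weaker conclusion holds: the events ``$x \to y \iff \phi(x)\to\phi(y)$'' are not independent fair coins, since the constraints chain along orbits of the permutation induced on pairs; the correct bound is of the form $2^{-(P-O)}$, where $P$ is the number of moved pairs and $O$ the number of orbits (with parity obstructions giving probability $0$), and one must check that bijections whose extension has $k$ non-fixed points contribute at most about $n^k 2^{-ckn}$, summable and dominated by the near-identity term $\Theta\bigl(n^2 2^{-(n-2)}\bigr)$. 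This parallels the standard proof that almost all tournaments are asymmetric, but it is not carried out in your proposal. Your fallback inductive construction runs into exactly the obstruction the paper reports: the authors' extension $D_6 \to D_7 \to D_8$ by adding dominating/dominated vertices breaks at $D_9$ because the two added vertices acquire isomorphic links. In short, your approach is genuinely different from the paper's suggested avenue (label-switching on $2$-labeled tournaments) and, if completed with explicit constants, would be stronger than anything in the paper --- it would reduce the conjecture to a finite computation --- but as written it is a program rather than a proof.
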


We have developed an explicit proof for the case $n \leq 8$ and a computer verification of the conjecture for the case $n\leq 100$. We present these now.

\begin{obs}
    For $n \leq 8$, Conjecture~\ref{tournamentthm} holds.
\end{obs}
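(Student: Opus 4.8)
The plan is to prove the biconditional of Conjecture~\ref{tournamentthm} in the range $n\le 8$ by first reformulating link-irregularity for tournaments, then treating the non-existence range $n\le 5$ by counting and the existence range $6\le n\le 8$ by explicit constructions. First I would record the structural observation that specializes the directed link to the tournament setting: if $T$ is a tournament on $n$ vertices, then its underlying graph is $K_n$, so $N^+(v)\cup N^-(v)=V(T)\setminus\{v\}$ for every vertex $v$, and hence the directed link $L(v)=T[N^+(v)\cup N^-(v)]$ is exactly the vertex-deleted subtournament $T-v$. Consequently $T$ is link-irregular if and only if the $n$ subtournaments $T-v_1,\dots,T-v_n$ are pairwise non-isomorphic; that is, $T$ is link-irregular precisely when its ``deck'' of cards consists of $n$ distinct isomorphism types.

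For the non-existence direction ($n\le 5$) I would apply a pigeonhole count. A link-irregular tournament on $n$ vertices requires $n$ pairwise non-isomorphic cards, each a tournament on $n-1$ vertices, so the number $t(n-1)$ of isomorphism classes of tournaments on $n-1$ vertices must satisfy $t(n-1)\ge n$. Using the small values $t(1)=1$, $t(2)=1$, $t(3)=2$, $t(4)=4$, one gets $t(1)=1<2$, $t(2)=1<3$, $t(3)=2<4$, and $t(4)=4<5$, so for each $n\in\{2,3,4,5\}$ there are strictly fewer than $n$ available cards. Hence no link-irregular tournament exists for $n\le 5$ (the case $n=1$ being degenerate/vacuous and excluded by the same convention already used implicitly in Theorem~\ref{existencethm}), which establishes the ``only if'' direction in this range.

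For the existence direction ($6\le n\le 8$) I would exhibit one explicit tournament on each of $6$, $7$, and $8$ vertices and certify that its cards are pairwise non-isomorphic using a cheap isomorphism invariant, namely the score sequence (the sorted out-degree sequence) of each card. The computation is made routine by the rule that deleting a vertex $v$ leaves the out-degree of every out-neighbour of $v$ unchanged and decreases the out-degree of every in-neighbour of $v$ by exactly $1$; thus the score sequence of $T-v$ is read off directly from the score sequence of $T$ together with the arcs incident to $v$. If the $n$ resulting card score sequences are all distinct, then the cards are pairwise non-isomorphic (score sequence being an isomorphism invariant), and $T$ is link-irregular. I would present the three tournaments by their arc sets (or score data) and tabulate the $n$ card score sequences for each.

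The hard part will be the existence step, specifically the risk that two cards coincide in their score sequences: the score sequence is not a complete invariant for tournaments, so such coincidences can occur and would leave two $(n-1)$-vertex cards undistinguished. In that event I would fall back on a finer separator, such as the multiset of second-order scores (for each vertex, the sum of the scores of the vertices it beats), or, as a last resort, a direct isomorphism check on the two offending cards. The cleanest route, and the one I would aim for, is to choose the three constructions so that all cards already have pairwise distinct score sequences, which is attainable for $n=6,7,8$; arranging such tournaments and verifying the distinctness of all cards is the bulk of the work and the place where the argument could become delicate.
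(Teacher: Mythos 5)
Your reformulation is correct (in a tournament the link of $v$ is exactly the vertex-deleted subtournament $T-v$, so link-irregularity means the $n$ cards are pairwise non-isomorphic), and your pigeonhole argument for the nonexistence range $n\le 5$ is sound; it is essentially the paper's own argument, which plays the four orientations of $K_4$ against five vertices for $n=5$ and disposes of $n\le 4$ via Theorem~\ref{existencethm}. The gap is entirely in the existence half, $6\le n\le 8$: you never produce the three tournaments. What you give is a verification scheme waiting for inputs; the explicit constructions and the completed checks are precisely the substantive content of the statement, and they are missing.

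Moreover, the route you single out as the one you would aim for --- choosing the constructions so that all cards have pairwise distinct score sequences --- is not merely risky but provably impossible at $n=6$, so no cleverer choice of example can rescue it. Briefly: there are exactly nine score sequences of $5$-vertex tournaments, of which four contain a $0$, four contain a $4$, and only three contain neither. A vertex of score $0$ in $T$ would appear with score $0$ in five of the six cards, and since no $5$-vertex score sequence contains two $0$'s, those five cards would need five distinct $0$-containing sequences, which do not exist; dually $T$ has no vertex of score $5$. So all six scores lie in $\{1,2,3,4\}$, leaving eight possible score sequences for $T$. Since a vertex of score $1$ puts a $0$ in exactly one card and a vertex of score $4$ puts a $4$ in exactly one card, the profiles $(2,2,2,3,3,3)$, $(1,2,3,3,3,3)$, $(2,2,2,2,3,4)$ and $(1,2,2,3,3,4)$ are killed outright by the shortage of sequences avoiding $0$ and $4$, and the remaining profiles $(1,1,3,3,3,4)$, $(1,1,2,3,4,4)$, $(1,1,1,4,4,4)$ and their duals fall to a short forced case analysis that always ends with two equal card sequences. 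This is exactly the phenomenon in the paper's own $D_6$ of Figure~\ref{tournamentfig}: the links of vertices $3$ and $5$ share the degree sequence $\{1,1,2,3,3\}$ and are separated only by counting directed $3$-cycles. So your fallback (a finer invariant or a direct isomorphism check) is not a contingency but the unavoidable core of the $n=6$ verification, and it is left unexecuted. For $n=7,8$, note also that the paper avoids fresh constructions altogether: it adds to $D_6$ a dominating source $w$ (so $L(w)=D_6$, while every other link gains a vertex of out-degree $5$, which $D_6$ lacks), and then adds a sink to $D_7$ with the dual in-degree argument; adopting that incremental device would spare you two of your three delicate verifications.
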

\begin{proof}
    We consider the "only if", part of the statement first. By Theorem \ref{existencethm}, we only have to show that there does not exist a link-irregular tournament on $5$ vertices. This is the case. For if a link-irregular tournament on 5 vertices existed, the link of each vertex would be an orientation of $K_4$, but there are only four distinct orientations of $K_4$. Since there are $5$ vertices, some two vertices must have isomorphic links.\\
    Now we show that there does indeed exist a link-irregular tournament on $n$ vertices for $6\leq n \leq 8$. We claim the tournament on $6$ vertices in Figure \ref{tournamentfig} is a link-irregular tournament. \\

    We first note the in-degree sets of the link each vertex in this graph:
    \begin{center}
        \begin{tabular}{c    c}
          $L(1):  \{1,1,1,3,4\}$  \ \   &  \ \  $L(4):  \{0,2,2,3,3\}$ \\
          $L(2):  \{1,1,2,2,4\}$ \ \    &  \ \  $L(5):  \{1,1,2,3,3\}$  \\
          $L(3):  \{1,1,2,3,3\}$  \ \  & \ \  $L(6):  \{1,2,2,2,3\}$ \\
        \end{tabular}    
    \end{center}
    The only two vertices that have the same in-degree sequence are $L(3)$ and $L(5)$. We look closer at these two links. Each of these links has a vertex of in-degree $2$. We count the number of directed $3$-cycles that contain this vertex of degree $2$. In $L(3)$, we count three such cycles while we only observe one in $L(5)$. So these links are not isomorphic. We have proved that this tournament is link-irregular. Call this tournament $D_6$.\\
    
    To form the link-irregular tournament $D_7$, we add a vertex $w$ to $D_6$ and add edges from this new vertex to each vertex of $D_6$. It is immediate that if $L(u) \cong L(v)$ in $D_7$, then $L(u)\cong L(v)$ in $D_6$ as well. If $L(u)\cong L(w)$ for some vertex $u$ of $D_6$, then since $w$ is a vertex of out-degree $5$ in $L(u)$, there must be some vertex of out-degree $5$ in $L(w)$. But $L(w) = D_6$, and $D_6$ has no vertex of out-degree 5. Hence $D_7$ is link-irregular.\\
    
    To form the link-irregular tournament $D_8$, we add a vertex $z$ to $D_7$ and add edges from each vertex of $D_7$ to $z$. As before, $L(u) \ncong L(v)$ for $u,v$ vertices in $D_7$. If $L(u) \cong L(z)$ then $L(u)$ has a vertex of in-degree $6$, hence $L(z)=D_7$ does too. But $D_7$ has no vertex of in-degree $6$. Hence $D_8$ is link-irregular.
\end{proof}
\begin{figure}[H]
	\centering
			\begin{tikzpicture}[scale=1.5, baseline=(current bounding box.north),
			midarrow/.style={decoration={markings, mark=at position 0.5 with {\arrow{Stealth[length=3mm, width=2mm]}}}, postaction={decorate}}]
			
			\foreach \i in {1,...,6} {
				\node[circle, fill=black, inner sep=2.5pt, label={90+60*(1-\i)}:\i] (v\i) at ({90+60*(1-\i)}:1.5cm) {};
			}
			
			\draw[very thick, midarrow] (v1) to[bend right=15] (v6);
			\draw[very thick, midarrow] (v1) to[bend right=15] (v3);
			\draw[very thick, midarrow] (v1) to[bend right=15] (v4);
			\draw[very thick, midarrow] (v2) to[bend right=15] (v1);
			\draw[very thick, midarrow] (v3) to[bend right=15] (v2);
			\draw[very thick, midarrow] (v3) to[bend right=15] (v4);
			\draw[very thick, midarrow] (v3) to[bend right=15] (v6);
			\draw[very thick, midarrow] (v4) to[bend right=15] (v5);
			\draw[very thick, midarrow] (v4) to[bend right=15] (v6);
			\draw[very thick, midarrow] (v4) to[bend right=15] (v2);
			\draw[very thick, midarrow] (v5) to[bend right=15] (v1);
			\draw[very thick, midarrow] (v5) to[bend right=15] (v2);
			\draw[very thick, midarrow] (v5) to[bend right=15] (v3);
			\draw[very thick, midarrow] (v5) to[bend right=15] (v6);
			\draw[very thick, midarrow] (v6) to[bend right=15] (v2);
		\end{tikzpicture}

	\caption{A link-irregular tournament on 6 vertices, $D_6$. }
	\label{tournamentfig}
\end{figure}
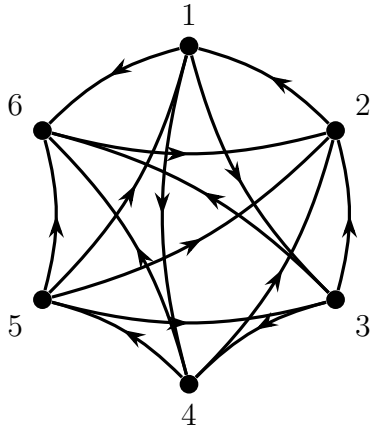

We had attempted to extend this construction to all values of $n$. However, if we were to repeat this as a two-step process, then we would end up having two isomorphic links in $D_9$. In particular, these would be the links of the vertex added to obtain $D_9$ and the vertex added to obtain $D_7$. When attempting to account for this and adjust our construction, it quickly became unruly and impossible to work with. Despite this flaw in our attempted construction, we have reason to believe the conjecture. \\

We developed a computer program to search for link-irregular tournaments, employing a multi-strategy approach including random generation, hill-climbing with edge flips, and seeded extension from known examples. Link graph isomorphism is detected using 
invariant signatures (vertex/edge counts, degree sequences, and directed 3-cycle counts) for efficient filtering, followed by full isomorphism testing via the VF2 algorithm~\cite{networkx}. Algorithm~\ref{alg:search} outlines our approach.

\begin{algorithm}[H]
	\caption{Computational Search for Link-Irregular Tournaments}
	\label{alg:search}
	\begin{algorithmic}[1]
		\Require Number of vertices $n$
		\Ensure Link-irregular tournament $T$ or failure report
		\State $T \gets$ \Call{RandomSearch}{$n$, max\_attempts $= 300$}
		\If{$T$ is link-irregular}
		\State \Return $T$
		\EndIf
		\State $T \gets$ \Call{HillClimbing}{$n$, steps $= 6000$, restarts $= 5$}
		\If{$T$ is link-irregular}
		\State \Return $T$
		\EndIf
		\State $T \gets$ \Call{SeededExtension}{$n$, attempts $= 50$}
		\State \Return $T$ or \textsc{null}
	\end{algorithmic}
\end{algorithm}

Using this algorithm, we computationally verified the following result. The upper bound $n = 100$ was chosen based on reasonable computational time; the algorithm 
can be extended to larger values with additional computing resources. Complete source code and detailed algorithmic specifications are available at 
\url{https://github.com/omidkhormali/Link-irregular-Digraphs}.

\begin{obs}
    For $n \leq 100$, Conjecture~\ref{tournamentthm} holds.
\end{obs}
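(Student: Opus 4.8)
The plan is to dispatch the ``only if'' direction and the small cases by appeal to results already in hand, and then reduce the claim to producing and certifying one explicit link-irregular tournament for every remaining order. For $n < 5$ no link-irregular digraph exists at all by Theorem~\ref{existencethm}, and the case $n = 5$ was ruled out in the preceding observation by the counting argument on the four orientations of $K_4$; the existence half for $6 \le n \le 8$ is likewise settled there by the explicit tournaments $D_6, D_7, D_8$. Thus it remains only to exhibit, for each $n$ with $9 \le n \le 100$, a single tournament $T_n$ on $n$ vertices whose $n$ directed links are pairwise non-isomorphic.

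For each such $n$ I would run Algorithm~\ref{alg:search} to obtain a candidate $T_n$ and then certify link-irregularity by a deterministic finite check rather than by trusting the heuristic. Concretely, I would compute the directed link $L(v) = D[N^+(v) \cup N^-(v)]$ for every vertex $v$ and then test all $\binom{n}{2}$ unordered pairs $\{u,v\}$ for isomorphism of $L(u)$ and $L(v)$. To keep this tractable I would attach to each link a tuple of isomorphism invariants --- the number of vertices, the number of arcs, the sorted in-degree and out-degree sequences, and the count of directed $3$-cycles --- and declare the tournament link-irregular as soon as no two links share the same invariant tuple. For the (rare) pairs that collide on all invariants I would fall back to an exact isomorphism decision via VF2, which is both sound and complete, so that a positive certificate is mathematically rigorous and not merely probabilistic.

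The point that turns this scheme into a proof, rather than a computation, is that link-irregularity of a \emph{fixed} tournament is a decidable finite property: once $T_n$ is produced, checking it requires only finitely many exact isomorphism tests, each resolved correctly by VF2. Hence every $n \in \{9,\dots,100\}$ for which the search returns a verified tournament is genuinely settled, and the verification step cannot produce a false positive. I therefore expect the main obstacle to be not correctness but \emph{completeness} of the search: the heuristic could fail to locate a link-irregular tournament on some order even though one exists. I would mitigate this exactly as Algorithm~\ref{alg:search} does, by chaining independent strategies (randomized generation, hill-climbing under single-arc reversals, and seeded extension from the $D_6, D_7, D_8$ family and from a previously certified $T_{n-1}$) and by enlarging the restart and attempt budgets for the orders on which the cheaper phases time out. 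Since in practice the search succeeds for every $n$ up to $100$ and each success is independently certified by the exact check above, the conjecture holds throughout this range.
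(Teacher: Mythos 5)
Your proposal is correct and follows essentially the same route as the paper: the small orders are dispatched by the earlier observation covering $n \leq 8$, and each remaining order $9 \leq n \leq 100$ is settled by running the multi-strategy search of Algorithm~\ref{alg:search} and certifying the resulting tournament with invariant signatures followed by exact VF2 isomorphism tests. Your explicit separation of soundness (the finite certification cannot give a false positive) from completeness (the heuristic search might fail to find a witness) is a clearer framing of the same computation the paper reports.
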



Finally, in the following remark, we note a potential avenue of attack to prove this conjecture.

\begin{remark}
    In \cite{alexander2}, the authors of this paper introduced the concept of a link-irregular labeling. This idea can naturally be extended to digraphs by replacing graph isomorphism in all definitions with digraph isomorphism. In this paper, it is proven that there exists a link-irregular 2-labeling of $K_n$ if and only if $n \geq 6$. Any orientation of such a labeling is a link-irregular 2-labeled tournament.\\
    Suppose that we have a link-irregular 2-labeled tournament. We would like to show that if we change one edge label from its current label to the other possible label, we can change the directions of the edges in the tournament so that we still have a link-irregular 2-labeled tournament. This has proven rather tricky and is the sticking point of this idea for a proof. If, however, this can be proven, then the conjecture follows immediately. We can change the edge labels and adjust the edge orientations repeatedly until we obtain a monochromatic link-irregular digraph. But then this is a link-irregular tournament.
\end{remark}

Next, we will adapt a result from \cite{alexander} to obtain an analogous result on edge bounds and planarity for link-irregular digraphs.

\begin{theorem}\label{lbound}
    Let $f(n)$ denote the minimum number of edges in a link-irregular digraph on $n$ vertices. Then $f(n)=\Omega(n\sqrt{\log n})$.
\end{theorem}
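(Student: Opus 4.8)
The plan is to prove the bound by a counting argument that mirrors the undirected case in \cite{alexander}, adjusting the combinatorial counts to account for the extra orientations available to directed links. Fix a link-irregular digraph $D$ on $n$ vertices and, for a vertex $v$, let $d(v) = |N^+(v) \cup N^-(v)|$ denote its underlying degree, so that the directed link $L_D(v)$ is a digraph on exactly $d(v)$ vertices. The decisive observation is that between any two vertices there are exactly four possibilities (no arc, an arc in either direction, or a pair of opposite arcs), so the number of non-isomorphic digraphs on $k$ vertices is at most $4^{\binom{k}{2}}$. Writing $n_k$ for the number of vertices of underlying degree $k$, link-irregularity forces the links of these $n_k$ vertices to be pairwise non-isomorphic digraphs on $k$ vertices, and hence $n_k \le 4^{\binom{k}{2}}$. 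This is the directed analogue of the bound $n_k \le 2^{\binom{k}{2}}$ used in the undirected setting; the base changes from $2$ to $4$, but this affects only constants and not the order of the final estimate.

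Next I would convert this per-degree capacity constraint into a lower bound on the total number of arcs. The idea is that the capacities $4^{\binom{k}{2}}$ grow so quickly that the low-degree classes cannot absorb more than a negligible fraction of the $n$ vertices, forcing most vertices to have degree on the order of $\sqrt{\log n}$. Concretely, set $D = \lfloor \tfrac12 \sqrt{\log_2 n} \rfloor$. Then the number of vertices of underlying degree less than $D$ is at most $\sum_{k=0}^{D-1} n_k \le \sum_{k=0}^{D-1} 4^{\binom{k}{2}} \le D\, 2^{D^2} \le D\, n^{1/4}$, which is smaller than $n/2$ once $n$ is large. Consequently at least $n/2$ vertices have underlying degree at least $D$.

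The edge bound then follows by summing degrees. The number of arcs is at least $\tfrac12 \sum_v d(v)$, and restricting the sum to the at least $n/2$ vertices of degree at least $D$ yields at least $\tfrac12 \cdot \tfrac{n}{2} \cdot D = \tfrac{nD}{4}$ arcs. Since $D = \Theta(\sqrt{\log n})$, this is $\Omega(n\sqrt{\log n})$, and because the argument bounds every link-irregular digraph on $n$ vertices from below, it bounds the minimum $f(n)$ as well. The same estimate applied to the underlying graph, where the number of edges equals $\tfrac12\sum_v d(v)$, gives the bound if one prefers to count undirected edges.

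The routine inequalities $n_k \le 4^{\binom{k}{2}}$ and the summation are easy; the step that genuinely pins down the exponent is the choice of the threshold $D$ and the verification that $\sum_{k<D} 4^{\binom{k}{2}}$ stays below $n/2$, since this is exactly where the $\sqrt{\log n}$ order is forced. I expect the main obstacle to be essentially bookkeeping: confirming that the crude bound $\sum_{k<D} 4^{\binom{k}{2}} \le D\, 2^{D^2}$ is tight enough (it is, because the sum is dominated by its largest term) and checking the small-$n$ boundary cases so that the asymptotic statement is not vacuous. No structural property of link-irregular digraphs beyond the counting bound $n_k \le 4^{\binom{k}{2}}$ is needed, which is precisely what makes this a direct adaptation of the undirected result.
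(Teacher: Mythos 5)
Your proposal is correct and takes essentially the same approach as the paper: both arguments rest on the same key bound that a link-irregular digraph has at most $4^{\binom{d}{2}} = 2^{2\binom{d}{2}}$ vertices of underlying degree $d$, and then convert this into a degree-sum lower bound. The only difference is bookkeeping: the paper takes the threshold $k$ maximal with $2^{2\binom{k}{2}} \leq n$ and must then verify that the correction term $\sum_{d<k}(k-d)2^{2\binom{d}{2}}$ is dominated by $nk$, whereas you choose a threshold roughly half as large so that the low-degree classes trivially absorb fewer than $n/2$ vertices --- a cleaner verification at the cost of a constant factor, which the $\Omega$ statement absorbs.
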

\begin{proof}
    This proof follows almost exactly as the analogous proof from \cite{alexander}. An upper bound on the number of isomorphism classes of graphs on $h$ vertices is $2^{\binom{h}{2}}$. Since there are two possible orientations for each edge, we have an upper bound of $(2^{\binom{h}{2}})^2 = 2^{2 \binom{h}{2}}$ isomorphism classes of digraphs on $h$ vertices.
    Given a number $n$ of vertices, we may pick $k$ such that $2^{2 \binom{k}{2}} \leq n < 2^{2 \binom{k+1}{2}}$. For every value $d$ such that $1 \leq d < k$, a link-irregular graph on $n$ vertices has at most $2^{2\binom{d}{2}}$ vertices of degree $d$. Such a graph also has at least $n - \sum_{d=1}^{k-1} 2^{2\binom{d}{2}}$ vertices of degree at least $k$. We have
    \[
    \sum d(v) \geq \sum_{d = 1}^{k-1}d2^{2\binom{d}{2}} + k(n - \sum_{d=1}^{k-1} 2^{2\binom{d}{2}}) = kn - \sum_{d = 1}^{k-1}(k-d)2^{2\binom{d}{2}}.
    \]
    Since $2^{2\binom{k}{2}} \leq n < 2^{2\binom{k+1}{2}}$, $k^2-k \leq \log n < k^2+k $. Hence, as $n \rightarrow \infty$, $k \rightarrow \sqrt{\log n}$. In addition, we have
    \[\sum_{d = 1}^{k-1}(k-d)2^{2\binom{d}{2}} < k\sum_{d = 1}^{k-1}2^{2\binom{d}{2}} < k\sum_{i = 1}^{2\binom{k-1}{2}} 2^i = k(\frac{1-2^{2\binom{k-1}{2}}}{1-2}) = k(2^{2\binom{k-1}{2}} - 1).
    \]
    Since $n \geq 2^{2\binom{k}{2}}$, the term $nk$ dominates $k(2^{2\binom{k-1}{2}} - 1)$ and therefore dominates the term $\sum_{d = 1}^{k-1}(k-d)2^{2\binom{d}{2}}$. Combining this with the first inequality and the degree-sum formula, we obtain $f(n) = \Omega(\frac{n\sqrt{\log n}}{2}) = \Omega(n\sqrt{\log n})$.
\end{proof}



We note that in the term $d=1$ in the sums, we assume the convention of defining $\binom{d}{2} = 0$ for $d < 2$.\\

We observe that this implies that almost all link-irregular digraphs are nonplanar (that is, their underlying graph is nonplanar). For, by this result, there exists some natural number $N$ such that for $n \geq N$, any link irregular digraph on $n$ vertices has more than $3n-6$ edges. We have this since $n\sqrt{\log n}$ grows at a faster rate than any linear function. By \cite{west}, any graph on $n$ vertices with more than $3n-6$ edges is nonplanar. Hence we have the following.

\begin{theorem}
    All but finitely many link-irregular digraphs are planar.
\end{theorem}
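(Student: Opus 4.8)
The plan is to first measure the displayed claim against the argument developed in the paragraph immediately preceding it, and here I anticipate that the main — indeed decisive — obstacle is that the two are in direct conflict. The statement as worded asserts that the planar link-irregular digraphs are cofinite, i.e.\ that only finitely many link-irregular digraphs are nonplanar. But the preceding discussion, built on Theorem~\ref{lbound}, produces a natural number $N$ such that every link-irregular digraph on $n \geq N$ vertices has strictly more than $3n-6$ edges and is therefore nonplanar, using the standard planarity bound $e(G) \leq 3n-6$ from \cite{west}. Thus there are link-irregular digraphs that are nonplanar for \emph{every} $n \geq N$ (exponentially many, by the earlier counting of orientations of a link-irregular graph), so the nonplanar family is infinite and the planar family is finite. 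This is precisely the negation of the assertion to be proved.

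So before attempting any derivation I would try to confirm this tension explicitly, expecting it to be the whole story. I would fix the $N$ supplied by the edge-count argument, and for a single $n \geq N$ exhibit a link-irregular digraph whose edge count exceeds $3n-6$; its underlying graph is then nonplanar by the cited bound. Because the $\Omega(n\sqrt{\log n})$ lower bound of Theorem~\ref{lbound} outpaces the linear threshold $3n-6$, the edge count is pushed past $3n-6$ for all sufficiently large $n$, so nonplanarity holds cofinitely rather than finitely often. At this point the literal statement — cofiniteness of the planar family — stands refuted, and I would conclude that no valid proof of it exists, since it is simply false.

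I would therefore record that the only genuine obstacle here is the wording itself: the word \emph{planar} inverts the conclusion that its own justification delivers, so the displayed theorem contradicts the sentence that introduces it. The substantive statement actually established by the surrounding text reads ``all but finitely many link-irregular digraphs are nonplanar,'' and it follows at once from Theorem~\ref{lbound} together with $e(G) \leq 3n-6$ for planar graphs. Addressing the statement exactly as worded, the honest verdict is that it is unprovable as written — there is no mathematical gap to bridge, only a sign to correct — and I would flag the discrepancy rather than supply a spurious argument for a false claim.
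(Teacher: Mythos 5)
You are right: the theorem as printed contains a typo --- the abstract and the paragraph immediately preceding the theorem both assert that almost all link-irregular digraphs are \emph{nonplanar}, and that paragraph (Theorem~\ref{lbound} giving $f(n)=\Omega(n\sqrt{\log n})$, which exceeds $3n-6$ for all $n\geq N$, plus the planarity bound $e\leq 3n-6$ and the existence of link-irregular digraphs on every $n\geq 5$) is precisely the paper's intended proof. Your argument for the corrected statement coincides with the paper's own, and your verdict on the literal wording is correct: the very justification supplied shows the planar link-irregular digraphs are the finitely many on fewer than $N$ vertices while the nonplanar ones are infinite in number, so ``planar'' must be read as ``nonplanar.''
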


This result shows that link-irregular digraphs typically have dense underlying graphs, which naturally leads us to explore their relationship with other graph properties. In particular, we now investigate the connection between link-irregular orientable graphs and link-irregular labeling.\\

To proceed, we recall a result from \cite{alexander2}.

\begin{obs}\cite{alexander2} \label{labelingthm}
    Let $ G$ be a graph. Then $G$ has a link-irregular labeling if and only if given any distinct $ x, y \in V(G) $, either $ L(x) \not\cong L(y) $, or  $ E(L(x)) \neq E(L(y)) $.
\end{obs}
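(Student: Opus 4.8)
The plan is to prove both directions by analyzing a single distinguished labeling---the one that assigns a distinct positive integer to every edge of $G$---and arguing that it is the best possible labeling for separating links. First I would pin down the notion of a labeled link: reading the label on an edge as its multiplicity, the labeled link of $v$ is the multigraph on vertex set $N(v)$ whose edges are those of $L(v) = G[N(v)]$, each taken with multiplicity equal to its label. A labeling is link-irregular precisely when these labeled multigraphs are pairwise non-isomorphic, where an isomorphism must preserve both adjacency and multiplicity. The one feature to keep in mind throughout is that a link may carry isolated vertices (neighbors of $v$ adjacent to no other neighbor of $v$), which contribute to the vertex count but not to the edge set.

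For the ``only if'' direction I would argue by contrapositive. Suppose some distinct $x,y$ satisfy both $L(x) \cong L(y)$ and $E(L(x)) = E(L(y))$. Since the two links have literally the same edge set $E \subseteq E(G)$, every labeling assigns identical labels to these common edges, so the labeled subgraphs on the non-isolated vertices coincide exactly. Because $L(x) \cong L(y)$ forces $|N(x)| = |N(y)|$, the two links also have the same number of isolated vertices. Matching the shared labeled edges and pairing up the isolated vertices then yields a label-preserving isomorphism, so no labeling can separate $x$ and $y$; hence $G$ admits no link-irregular labeling.

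For the ``if'' direction I would exhibit the all-distinct labeling and show it suffices. With distinct labels, any label-preserving isomorphism between the labeled links of $x$ and $y$ must send each edge to the unique edge of equal label, which forces the label sets---and therefore the underlying edge sets $E(L(x))$ and $E(L(y))$---to be equal, and also forces $|N(x)| = |N(y)|$ in order to match isolated vertices. Thus the all-distinct labeling fails to distinguish $x$ and $y$ exactly when $E(L(x)) = E(L(y))$ and $|N(x)| = |N(y)|$, a situation equivalent to $L(x) \cong L(y)$ together with $E(L(x)) = E(L(y))$. The hypothesis rules this out for every pair, so the all-distinct labeling is link-irregular.

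The main obstacle, and the reason the characterization needs both clauses rather than one, is the correct bookkeeping of isolated vertices. The condition $E(L(x)) = E(L(y))$ alone does not force the labeled links to agree (differing numbers of isolated neighbors can still separate them), and $L(x) \cong L(y)$ alone does not either (distinct labels placed on structurally isomorphic but genuinely different edge sets can separate them); only their conjunction produces an isomorphism that survives every labeling. Verifying that ``$L(x) \cong L(y)$ and $E(L(x)) = E(L(y))$'' is exactly equivalent to ``equal edge sets and equal vertex counts'' is the small equivalence on which both directions hinge, and I would isolate it as the key lemma before assembling the two implications.
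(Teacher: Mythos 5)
Your proof is correct, but note that there is no in-paper argument to compare it against: the paper states this observation as a recalled result, citing \cite{alexander2}, and gives no proof of it. Your argument is a complete, self-contained proof and is the natural one. For necessity: if $L(x) \cong L(y)$ and $E(L(x)) = E(L(y))$, then under any labeling the two labeled links agree literally on the common edge set (and hence on all non-isolated vertices), while the isomorphism $L(x) \cong L(y)$ forces $|N(x)| = |N(y)|$ and therefore equal numbers of isolated vertices, so the labeled links remain isomorphic under every labeling. For sufficiency: the all-distinct labeling works because a label-preserving isomorphism must match globally unique labels, which forces $E(L(x)) = E(L(y))$ as literal edge sets, and such an isomorphism is in particular a graph isomorphism, giving $L(x) \cong L(y)$ --- exactly the conjunction the hypothesis forbids. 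Your attention to isolated vertices of the links is precisely the point where a sloppier argument would fail, since those vertices carry no edge information yet affect isomorphism. One small streamlining: in the ``if'' direction you do not need the intermediate equivalence with ``equal edge sets and $|N(x)| = |N(y)|$''; a label-preserving isomorphism of the labeled links already yields both $L(x) \cong L(y)$ and $E(L(x)) = E(L(y))$ directly, so the contradiction is immediate.
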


We are now ready to state our theorem. 

\begin{theorem}
    Suppose $G$ is a link-irregular orientable graph. Then $G$ is link-irregular labelable.
\end{theorem}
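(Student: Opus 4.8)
The plan is to reduce everything to the characterization in Observation~\ref{labelingthm}: to prove $G$ is link-irregular labelable, it suffices to verify that for every pair of distinct vertices $x,y \in V(G)$, either $L(x) \ncong L(y)$ or $E(L(x)) \neq E(L(y))$. I would argue by contradiction, assuming there exist distinct $x,y$ with both $L(x) \cong L(y)$ and $E(L(x)) = E(L(y))$, and derive a contradiction with the hypothesis that $G$ admits a link-irregular orientation $D$ (which is what ``link-irregular orientable'' supplies). Note that if instead one only used that $G$ itself is link-irregular, the first disjunct would always hold and the statement would be immediate; the content is precisely that a link-irregular \emph{orientation} is enough, even when the undirected links may coincide.

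First I would record the key structural remark about orientations: since $D$ is an orientation of $G$, each edge of $G$ receives exactly one direction and there are no $2$-cycles, so for every vertex $v$ the out- and in-neighborhoods partition the neighborhood, $N^+(v) \cup N^-(v) = N(v)$. Consequently the directed link $L_D(v)$ has underlying vertex set $N(v)$ and is obtained from the undirected link $L(v) = G[N(v)]$ simply by transferring to each edge its fixed orientation in $D$.

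Next I would analyze the offending pair under the assumption $E(L(x)) = E(L(y))$. Let $W$ be the common set of endpoints of these (literally identical) edges. Because $E(L(x))$ and $E(L(y))$ are the \emph{same} edges of $G$, and each already carries a fixed direction in $D$, the directed subgraphs that $L_D(x)$ and $L_D(y)$ induce on $W$ are identical, not merely isomorphic. The remaining vertices of each directed link are isolated: they are $N(x)\setminus W$ in $L_D(x)$ and $N(y)\setminus W$ in $L_D(y)$. From $L(x)\cong L(y)$ we obtain $|N(x)| = |N(y)|$, so these two sets of isolated vertices have equal cardinality.

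Finally I would assemble the isomorphism: the identity on $W$ together with any bijection between the equally sized sets $N(x)\setminus W$ and $N(y)\setminus W$ is a digraph isomorphism $L_D(x) \cong L_D(y)$, contradicting the link-irregularity of $D$. This yields the required dichotomy and, by Observation~\ref{labelingthm}, shows $G$ is link-irregular labelable. I do not expect a serious obstacle; the only point needing care is the bookkeeping of isolated vertices, i.e.\ checking that a shared identical directed core plus matching isolated-vertex counts forces a digraph isomorphism. The crucial and slightly subtle input is that $E(L(x)) = E(L(y))$ asserts equality of actual edge sets in $G$ rather than mere isomorphism, which is exactly what makes the fixed orientations of the cores coincide instead of only matching up to relabeling.
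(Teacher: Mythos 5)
Your proposal is correct and follows essentially the same route as the paper: reduce to Observation~\ref{labelingthm}, then show that a pair with $L(x)\cong L(y)$ and $E(L(x))=E(L(y))$ would force $L_D(x)\cong L_D(y)$ in every orientation $D$, contradicting link-irregular orientability. In fact your write-up is slightly more careful than the paper's, since you explicitly justify the step the paper asserts without comment --- that identical directed edge sets plus $|N(x)|=|N(y)|$ (needed to match up the isolated vertices of the two directed links) yield a genuine digraph isomorphism.
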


\begin{proof}
If $L(u) \not\cong L(v)$ for any distinct vertices $u, v \in V(G)$, then the result follows immediately since the underlying graph is already link-irregular. Suppose now that $L(u) \cong L(v)$ in $G$ for distinct vertices $u, v \in V(G)$. If $E(L(u)) = E(L(v))$, then we must also have $E(L_D(u)) = E(L_D(v))$ for any orientation $D$ of $G$. Consequently, $L_D(u) \cong L_D(v)$ for every orientation $D$, and so $G$ is not link-irregular orientable, contrary to our hypothesis. Therefore, we must have $E(L(u)) \neq E(L(v))$, and hence, by Observation~\ref{labelingthm}, $G$ is link-irregular labelable.
\end{proof}

Following this result, the natural question is whether the converse is true. It turns out that the converse is not true.

\begin{obs}
    The following graph is link-irregular 2-labelable, but not link-irregular orientable.
	\begin{figure}[H]
		\centering
		\begin{tikzpicture}[scale=1.1,
			every node/.style={circle,fill=black,inner sep=2pt},
			textnode/.style={inner sep=0pt,fill=none}
			]
			
			\node (v1) at (0,2) {};
			\node (v2) at (-2,1) {};
			\node (v3) at (0,1) {};
			\node (v4) at (2,1) {};
			\node (v5) at (-2,0) {};
			\node (v6) at (0,0) {};
			\node (v7) at (2,0) {};
			
			\node[textnode, above=4pt] at (v1) {1};
			\node[textnode, left=4pt]  at (v2) {2};
			\node[textnode, right=4pt] at (v3) {3};
			\node[textnode, right=4pt] at (v4) {4};
			\node[textnode, left=4pt]  at (v5) {5};
			\node[textnode, below=4pt] at (v6) {6};
			\node[textnode, below=4pt] at (v7) {7};
			
			\draw (v1)--(v2) ;
			\draw (v1)--(v4) ;
			\draw (v1)--(v3) ;
			\draw (v3)--(v6) ;
			\draw (v2)--(v3) ;
			\draw (v2)--(v5) ;
			\draw (v2)--(v6) ;
			\draw (v5)--(v6) ;
			\draw (v4)--(v7) ;
			
			\draw (v4)--(v6) ;
			\draw (v6)--(v7) ;
			
		\end{tikzpicture}
		\label{graph}
	\end{figure}

\end{obs}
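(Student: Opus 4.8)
The plan is to handle the two halves of the statement separately, both resting on an explicit computation of the seven vertex links of the displayed graph $G$. First I would record the neighborhoods and read off each induced link: from $N(1)=\{2,3,4\}$, $N(2)=\{1,3,5,6\}$, $N(3)=\{1,2,6\}$, $N(4)=\{1,6,7\}$, $N(5)=\{2,6\}$, $N(6)=\{2,3,4,5,7\}$, $N(7)=\{4,6\}$ one finds that $L(5)$ and $L(7)$ are each a single edge ($K_2$), that $L(1)$ and $L(4)$ are each an edge together with an isolated vertex ($K_2\cup K_1$), while $L(2)\cong P_4$, $L(3)\cong P_3$, and $L(6)\cong P_3\cup K_2$. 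Sorting the links by their numbers of vertices and edges then shows that the only pairs of vertices whose (undirected) links are isomorphic are $\{1,4\}$ and $\{5,7\}$.

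For link-irregular $2$-labelability, I would first note that in each of these two pairs the two links use different edges of $G$, namely $E(L(1))=\{23\}\neq\{67\}=E(L(4))$ and $E(L(5))=\{26\}\neq\{46\}=E(L(7))$, so by Observation~\ref{labelingthm} a link-irregular labeling exists. To pin down that two labels suffice I would exhibit one explicitly: assign label $2$ to the edges $\{4,6\}$ and $\{6,7\}$ and label $1$ to every remaining edge. The key bookkeeping step is to verify that this choice does not accidentally identify any previously distinct links. Since $\{6,7\}$ lies in the link of no vertex other than $4$ and $\{4,6\}$ lies in the link of no vertex other than $7$, only $L(4)$ and $L(7)$ are altered, each acquiring one doubled edge, and one checks directly that the seven resulting labeled multigraphs are pairwise non-isomorphic.

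For the failure of orientability the crux is that $L(5)$ and $L(7)$ are both single edges. Under any orientation $D$ of $G$, the directed link $L_D(5)$ is a single arc and so is $L_D(7)$, and any two one-arc digraphs are isomorphic; hence $L_D(5)\cong L_D(7)$ for every orientation $D$, so no orientation of $G$ is link-irregular and $G$ is not link-irregular orientable. The pair $\{1,4\}$ supplies a second witness, since an arc together with an isolated vertex is likewise isomorphism-invariant under reorientation.

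The main obstacle I anticipate is not conceptual but the careful verification in the middle step, confirming that the exhibited $2$-labeling leaves all seven links pairwise distinct, together with making sure the classification of isomorphic link pairs is genuinely exhaustive. The conceptual heart, by contrast, is the simple but decisive fact that one single edge can be distinguished from another by edge multiplicity (a label) yet never by orientation, which is precisely what separates labelability from orientability in this example.
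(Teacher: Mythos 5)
Your proposal is correct and follows essentially the same route as the paper: you exhibit the identical $2$-labeling (label $2$ on edges $\{4,6\}$ and $\{6,7\}$, label $1$ elsewhere) and you rule out orientability by the same observation that vertices $5$ and $7$ both have link $K_2$, which admits only one orientation up to isomorphism. Your version simply adds more explicit bookkeeping (full computation of all seven links and the check that the labeling only alters $L(4)$ and $L(7)$), which the paper leaves implicit.
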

\begin{proof}
    To prove that this graph has a link-irregular 2-labeling, we provide one. 
	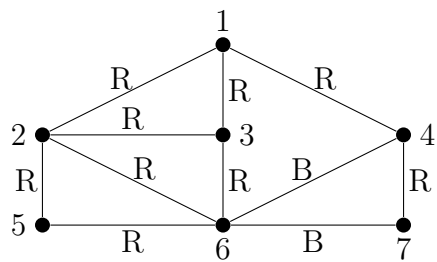
\begin{figure}[H]
		\centering
		\begin{tikzpicture}[scale=1.2,
			every node/.style={circle,fill=black,inner sep=2pt},
			textnode/.style={inner sep=0pt,fill=none}
			]
			
			\node (v1) at (0,2) {};
			\node (v2) at (-2,1) {};
			\node (v3) at (0,1) {};
			\node (v4) at (2,1) {};
			\node (v5) at (-2,0) {};
			\node (v6) at (0,0) {};
			\node (v7) at (2,0) {};
			
			\node[textnode, above=4pt] at (v1) {1};
			\node[textnode, left=4pt]  at (v2) {2};
			\node[textnode, right=4pt] at (v3) {3};
			\node[textnode, right=4pt] at (v4) {4};
			\node[textnode, left=4pt]  at (v5) {5};
			\node[textnode, below=4pt] at (v6) {6};
			\node[textnode, below=4pt] at (v7) {7};
			
			\draw (v1)--(v2) node[midway, above left, textnode]{R};
			\draw (v1)--(v4) node[midway, above right, textnode]{R};
			\draw (v1)--(v3) node[midway,  right, textnode]{R};
			\draw (v3)--(v6) node[midway,  right, textnode]{R};
			\draw (v2)--(v3) node[midway, above, textnode]{R};
			\draw (v2)--(v5) node[midway, left, textnode]{R};
			\draw (v2)--(v6) node[midway, above right, textnode]{R};
			\draw (v5)--(v6) node[midway, below, textnode]{R};
			\draw (v4)--(v7) node[midway, right, textnode]{R};
			
			\draw (v4)--(v6) node[midway, above left, textnode]{B};
			\draw (v6)--(v7) node[midway, below, textnode]{B};
			
		\end{tikzpicture}
		\caption{A link-irregular $2$-labelable graph. The label R denotes a red edge, and B denotes a blue edge.}
		\label{labeled}
	\end{figure}
    
    To see that this graph has no link-irregular orientation, we notice that two vertices have $K_2$ as a link. Since $K_2$ only has one orientation up to isomorphism, there is no possible link-irregular orientation of this graph.
\end{proof}

We now turn our attention to special classes of digraphs with regularity constraints. In particular, we investigate whether link-irregularity can coexist with strong connectivity and balanced degree properties. To formalize this discussion, we first recall the relevant definitions.

\begin{defi} \cite{west} [Strongly Connected Digraph]
		A directed graph (digraph) $D = (V, A)$ is said to be \emph{strongly connected} if for every pair of vertices $u, v \in V$, there exists a directed path from $u$ to $v$ and a directed path from $v$ to $u$.
	\end{defi}
	
	\begin{defi} \cite{west} [Eulerian Digraph]
		A digraph $D = (V, A)$ is called \emph{Eulerian} if it is strongly connected and for every vertex $v \in V$, the in-degree equals the out-degree; that is, $d^+(v) = d^-(v)$.
	\end{defi}

	The following observation follows from the preceding definitions.
    
	\begin{obs}
		For all integers $n \geq 3$ and $1 \leq k \leq n - 1$, there exist Eulerian digraphs on $n$ vertices with outdegree $k$ that are not link-irregular.
	\end{obs}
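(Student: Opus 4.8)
The plan is to exhibit, for each admissible pair $(n,k)$, a single highly symmetric Eulerian digraph and argue it is link-regular (all links isomorphic), which immediately precludes link-irregularity. Concretely, I would take the circulant digraph $D$ on vertex set $\mathbb{Z}_n = \{0,1,\dots,n-1\}$ with connection set $S=\{1,2,\dots,k\}$: there is an arc from $i$ to $i+s \pmod{n}$ for every $i \in \mathbb{Z}_n$ and every $s \in S$. By construction each vertex $i$ has out-neighbours $\{i+1,\dots,i+k\}$ and in-neighbours $\{i-1,\dots,i-k\}$, so $d^+(v)=d^-(v)=k=|S|$ for all $v$, giving the required outdegree $k$ together with the in/out balance needed for the Eulerian property. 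Note that $k\le n-1$ guarantees $S\subseteq\{1,\dots,n-1\}$, so there are no self-loops and the $k$ out-arcs at each vertex are genuinely distinct.

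Next I would verify the two defining conditions of an Eulerian digraph. The balance $d^+(v)=d^-(v)$ is immediate from the previous paragraph. For strong connectivity, the set of vertices reachable from a fixed vertex by directed walks is the subsemigroup of $\mathbb{Z}_n$ generated by $S$; since $\mathbb{Z}_n$ is a finite group, this subsemigroup coincides with the subgroup $\langle S\rangle$, and because $1\in S$ we have $\langle S\rangle=\mathbb{Z}_n$. Hence every vertex is reachable from every other, so $D$ is strongly connected and therefore Eulerian. This argument works uniformly across the whole range $1\le k\le n-1$ precisely because $1\in S$ forces the generation condition regardless of $k$.

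Finally, to show that $D$ is not link-irregular I would use vertex-transitivity. The translation $\sigma\colon v\mapsto v+1 \pmod{n}$ is an automorphism of $D$, and for any two vertices $u,v$ the power $\sigma^{\,v-u}$ sends $u$ to $v$ and carries $N^+(u)\cup N^-(u)$ bijectively onto $N^+(v)\cup N^-(v)$, restricting to an isomorphism of the induced subdigraphs. Thus $L(u)\cong L(v)$ for every pair $u,v$; since $n\ge 3$, there certainly exist two distinct vertices with isomorphic links, so $D$ fails to be link-irregular, as desired.

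I do not expect any serious obstacle here: the construction is explicit and the argument is structural rather than computational. The only points requiring care are confirming strong connectivity over the entire parameter range (handled cleanly by insisting $1\in S$, so that $\langle S\rangle=\mathbb{Z}_n$) and spelling out that a digraph automorphism carries the directed link of one vertex isomorphically onto that of its image. Both are routine, so the main effort is simply assembling these observations into a clean statement.
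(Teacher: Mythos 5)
Your proposal is correct and follows essentially the same route as the paper: both construct the circulant digraph on $\mathbb{Z}_n$ with connection set $\{1,\dots,k\}$, verify the in/out-degree balance and strong connectivity, and conclude from rotational symmetry that all links are isomorphic. If anything, your version is slightly tighter than the paper's, since you make the strong-connectivity argument precise via $1\in S$ generating $\mathbb{Z}_n$ and spell out the translation automorphism carrying one link onto another, where the paper appeals to these facts more informally.
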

	
	\begin{proof}
		Fix $n \geq 3$ and let $k \in \{1, 2, \dots, n-1\}$. Define the digraph $D = (V, A)$ as follows:
		\[
		V = \{v_0, v_1, \dots, v_{n-1}\},
		\]
		\[
		A = \{ v_i \to v_{(i + j) \bmod n} \mid 1 \leq j \leq k,\ 0 \leq i \leq n - 1 \}.
		\]
		In this construction, each vertex has outdegree $k$, sending arcs to the next $k$ vertices cyclically. Also, each vertex also has indegree $k$, since it receives arcs from the $k$ preceding vertices modulo $n$. The digraph is strongly connected because the steps $\{1, 2, \dots, k\}$ generate the full group $\mathbb{Z}_n$ (especially when $\gcd(k, n) = 1$), and in general the structure ensures reachability via directed paths. Thus, $D$ is Eulerian. However, due to the rotational symmetry of the construction, all vertices have isomorphic directed links. Specifically, for each vertex $v_i$, the link $L_D(v_i)$ is induced by the same relative configuration of neighbors. Therefore, $D$ is not link-irregular.
	\end{proof}

	
	

We now present several observations on the existence of link-irregular digraphs with specific properties. 

\begin{obs}
    There exists a link-irregular digraph on $6$ vertices in which all vertices have the same outdegree $2$.
\end{obs}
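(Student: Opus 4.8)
The plan is to produce an explicit construction, since this is an existence statement about a small, fixed object. I would begin by noting that a digraph on $6$ vertices in which every vertex has outdegree exactly $2$ has exactly $12$ arcs, and I would aim to write down such a digraph together with a verification table of its six directed links. The most natural starting point is to take a $2$-regular (out-regular) digraph whose underlying structure already has enough asymmetry; a good candidate is a circulant-type base on $\{v_0,\dots,v_5\}$ with each $v_i$ sending arcs to $v_{i+1}$ and $v_{i+2}$ (indices mod $6$), and then to \emph{perturb} this base by rerouting a few arcs to break the rotational symmetry. The perturbation must preserve outdegree $2$ at every vertex: whenever I redirect an arc $v_i\to v_j$ to $v_i\to v_{j'}$, the tail keeps its outdegree, so I only need to ensure I do not create multiple arcs out of the same vertex to the same head. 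This freedom to reroute heads (while keeping each outdegree pinned at $2$) is the main lever for distinguishing the links.

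First I would fix an explicit arc set and tabulate, for each vertex $v$, its neighborhood $N^+(v)\cup N^-(v)$ and the induced subdigraph $L_D(v)$. By Observation stated earlier (the one following Proposition~\ref{noperfect}), I already know I should expect triangles in the underlying graph, so the circulant base is reasonable because $v_i\to v_{i+1}\to v_{i+2}$ together with $v_i\to v_{i+2}$ gives directed triangles. Next I would compute a cheap isomorphism invariant for each of the six links—exactly the invariants the authors use elsewhere, namely the number of vertices and arcs of each link, the in-degree and out-degree multisets within the link, and the count of directed $3$-cycles inside the link. If these invariant vectors are pairwise distinct across the six vertices, then the six links are pairwise non-isomorphic and the digraph is link-irregular. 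My concrete goal is therefore to choose the rerouting so that these six invariant vectors are all different.

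The key steps, in order, are: (1) write down the candidate arc set explicitly as a list of $12$ ordered pairs, confirming each outdegree equals $2$; (2) for each vertex compute $N^+(v)\cup N^-(v)$ and list the arcs internal to that set to obtain $L_D(v)$; (3) record the invariant vector (link order, link size, in/out-degree sequences, directed $3$-cycle count) for each of the six links; (4) observe the six vectors are pairwise distinct and conclude non-isomorphism; and (5) in any residual ties of the cheap invariants, fall back to a direct structural comparison (as the $n=6$ tournament proof does by counting $3$-cycles through a distinguished vertex). The last step is the safety net guaranteeing correctness even if the coarse invariants coincide.

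The main obstacle I anticipate is step~(4): keeping every outdegree equal to $2$ is a genuine global constraint, and out-regularity tends to homogenize the out-structure of the links, so the distinguishing power must come almost entirely from the \emph{in}-neighborhoods and from how in- and out-neighbors interact inside each link. Concretely, vertices that receive arcs from very different sets can have links of different sizes (since $|N^+(v)\cup N^-(v)|$ varies with the in-degree and with overlaps between in- and out-neighbors), and this variation in link order is the cleanest way to separate several of the six links at once; the remaining few that share a link order must then be separated by finer invariants. I would therefore deliberately design the rerouting to spread the in-degrees of $v_0,\dots,v_5$ as unevenly as the outdegree-$2$ constraint permits, so that several links differ already in vertex count, leaving at most one or two pairs to distinguish by the directed-$3$-cycle count. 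If a stubborn pair resists all cheap invariants, the fallback structural argument of step~(5) closes the gap; since the object is a fixed $6$-vertex digraph, this final check is a finite, direct verification rather than an asymptotic argument.
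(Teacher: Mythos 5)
Your proposal has the right overall shape---it mirrors the paper's approach exactly, namely exhibit an explicit 2-out-regular digraph on $6$ vertices and verify link-irregularity by comparing cheap isomorphism invariants of the six links (the paper gives such a digraph in a figure and states that direct computation of link-degree sequences confirms irregularity). However, as written, your proposal contains a genuine gap: it is a \emph{plan} to produce a witness, not a proof. You never write down the $12$ arcs. Steps (1)--(5) are described in the conditional ("I would fix an explicit arc set\ldots", "my concrete goal is therefore to choose the rerouting so that\ldots"), but none of them is executed: no concrete perturbation of the circulant base is specified, no links are computed, and no invariant vectors are compared. Since the entire content of this observation is the existence of one particular finite object, a proof must either exhibit that object and verify it, or give some non-constructive existence argument; your text does neither. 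The fallback in step (5) does not repair this, because there is nothing concrete for it to fall back on.

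The gap matters beyond formality: it is not obvious a priori that a suitable perturbation of the circulant $v_i \to v_{i+1}, v_{i+2}$ exists, and your own final paragraph correctly identifies the danger (out-regularity homogenizes the out-structure, so all distinguishing power must come from in-degrees and in/out overlaps). That worry is resolvable---the paper's example has in-degree sequence $(2,1,3,1,3,2)$, confirming your intuition that spreading in-degrees unevenly is the right lever---but resolving it requires actually doing the search and the verification, which is precisely what is absent. To close the gap, you would need to commit to an explicit arc set, e.g.\ the paper's $\{0\to1, 0\to2, 1\to2, 1\to4, 2\to4, 2\to5, 3\to2, 3\to4, 4\to0, 4\to5, 5\to0, 5\to3\}$, tabulate the six links, and check pairwise non-isomorphism.
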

\begin{proof}
    Consider the digraph $D_6$ on vertex set $V = \{0, 1, 2, 3, 4, 5\}$ with edge set as shown in Figure~\ref{fig:d6_2out}. Each vertex has outdegree exactly $2$, making this a 2-out-regular digraph. Direct computation shows that all vertices have distinct link-degree sequences, confirming that $D_6$ is link-irregular.
\end{proof}

\begin{figure}[H]
		\centering
		\begin{tikzpicture}[scale=1.3, baseline=(current bounding box.north),
			midarrow/.style={decoration={markings, mark=at position 0.5 with {\arrow{Stealth[length=2.5mm, width=1.5mm]}}}, postaction={decorate}}]
			
			\node[circle, fill=black!50, inner sep=3pt, label=above:3] (v3) at (0, 2) {};
			\node[circle, fill=black!50, inner sep=3pt, label=left:5] (v5) at (-0.5, 1) {};
			\node[circle, fill=black!50, inner sep=3pt, label=left:2] (v2) at (-1.5, 0.3) {};
			\node[circle, fill=black!50, inner sep=3pt, label=right:4] (v4) at (1.5, 0.3) {};
			\node[circle, fill=black!50, inner sep=3pt, label=below:0] (v0) at (0, -1.2) {};
			\node[circle, fill=black!50, inner sep=3pt, label=right:1] (v1) at (1.2, -0.5) {};
			
			\draw[thick, midarrow] (v0) to[bend right=10] (v1);
			\draw[thick, midarrow] (v0) to[bend right=10] (v2);
			
			\draw[thick, midarrow] (v1) to[bend right=10] (v4);
			\draw[thick, midarrow] (v1) to[bend right=10] (v2);
			
			\draw[thick, midarrow] (v2) to[bend right=10] (v5);
			\draw[thick, midarrow] (v2) to[bend right=10] (v4);
			
			\draw[thick, midarrow] (v3) to[bend right=10] (v4);
			\draw[thick, midarrow] (v3) to[bend right=10] (v2);
			
			\draw[thick, midarrow] (v4) to[bend right=10] (v5);
			\draw[thick, midarrow] (v4) to[bend right=10] (v0);
			
			\draw[thick, midarrow] (v5) to[bend right=10] (v0);
			\draw[thick, midarrow] (v5) to[bend right=10] (v3);
			
		\end{tikzpicture}
		\caption{Link-irregular 2-out-regular digraph on 6 vertices.}
		\label{fig:d6_2out}
	\end{figure}
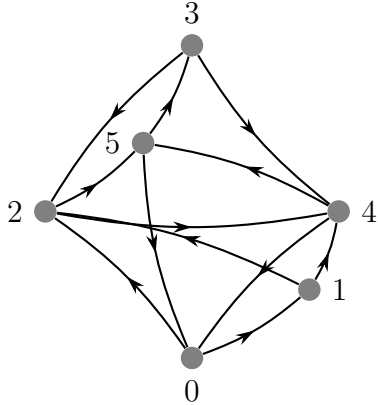



We next consider link-irregular digraphs with both constant in-degree and constant out-degree.

\begin{obs}
    There exists a link-irregular digraph on $9$ vertices in which all vertices have the same outdegree and indegree $4$. Further, this graph is a tournament.
\end{obs}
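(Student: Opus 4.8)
The plan is to exploit the special feature of tournaments that, since every vertex is joined to every other vertex, the neighborhood $N^+(v)\cup N^-(v)$ of any vertex $v$ is all of $V(D)\setminus\{v\}$. Hence the directed link $L_D(v)$ is exactly the vertex-deleted subtournament $D-v$ on the remaining $8$ vertices, and ``$D$ is link-irregular'' is equivalent to ``the nine subtournaments $D-v_0,\dots,D-v_8$ are pairwise non-isomorphic.'' The first thing I would record is a necessary condition: if $D$ had an automorphism carrying $u$ to $v$, then $D-u\cong D-v$, so a link-irregular regular tournament must have no two vertices in a common automorphism orbit. In particular every vertex-transitive construction is ruled out; this immediately kills the natural circulant/rotational regular tournaments on $\mathbb{Z}_9$ (those generated by a difference set $S$ with $S\cup(-S)=\{1,\dots,8\}$ and $S\cap(-S)=\emptyset$), exactly as the rotational symmetry defeated link-irregularity in the Eulerian construction above. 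So the target object must be a sufficiently asymmetric regular tournament, and I would obtain one by the computer search of Algorithm~\ref{alg:search} and then present its arc set explicitly, say as the list of out-neighbor sets $N^+(v_i)$.

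The delicate point, which I would emphasize, is that the standard cheap invariants are useless here. In any regular tournament with out-degree $4$, deleting $v$ lowers the out-degree of each of the four in-neighbors of $v$ by one and leaves the four out-neighbors untouched, so every link $D-v$ has the identical score sequence $(3,3,3,3,4,4,4,4)$, and the in-degree sequence is then forced to be identical as well. Worse, the number of directed $3$-cycles in a tournament on $h$ vertices equals $\binom{h}{3}-\sum_w\binom{d^+(w)}{2}$, which depends only on the score sequence, so every link contains exactly $\binom{8}{3}-\bigl(4\binom{3}{2}+4\binom{4}{2}\bigr)=56-36=20$ directed triangles. Thus score sequences and $3$-cycle counts, the very invariants used for filtering in Algorithm~\ref{alg:search}, coincide across all nine links and cannot separate them.

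Consequently the verification must use a strictly finer invariant, and this is the main obstacle. The plan is to attach to each link $T=D-v$ a second-order signature obtained by one round of color refinement: for each vertex $w$ of $T$ record the pair consisting of $d^+_T(w)$ together with the score sequence (or triangle count) of the subtournament induced on $N^+_T(w)$, and take the sorted multiset of these pairs over all $w$; equivalently one may tabulate the number of occurrences of each of the four isomorphism types of $4$-vertex subtournaments, a statistic not determined by the score sequence. I would then check that these refined signatures are pairwise distinct for the nine links, and wherever two signatures happen to coincide fall back on a full digraph isomorphism test via the VF2 algorithm~\cite{networkx}. The hard part is not any single computation but guaranteeing that such a tournament exists at all: one must produce a regular tournament whose automorphism group is trivial and, beyond that, whose nine vertex-deleted subtournaments are genuinely distinguished by some computable invariant, and it is precisely the collapse of all the elementary invariants that forces reliance on the finer signature and on exhaustive isomorphism checking.
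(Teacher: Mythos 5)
Your proposal follows the same basic route as the paper: exhibit an explicit $4$-regular tournament on nine vertices (obtained by computer search, since rotational/circulant constructions are ruled out by symmetry, as you correctly note) and verify that its nine links are pairwise non-isomorphic, using the fact that in a tournament the link of $v$ is exactly the vertex-deleted subtournament $D-v$. Where you diverge is the verification step, and there you are right while the paper as written is not. The paper claims to confirm link-irregularity by computing ``the link-outdegree sequence'' and checking that ``no two vertices share the same multiset of neighbor degrees.'' Your analysis shows this check is vacuous: in a $4$-regular tournament on nine vertices, every link has score sequence $(3,3,3,3,4,4,4,4)$ (the four in-neighbors of the deleted vertex each lose one out-arc, its four out-neighbors each lose one in-arc), hence identical in-degree sequences and degree pairs as well, and by the identity that the number of directed triangles is $\binom{8}{3}-\sum_w\binom{d^+_T(w)}{2}=56-36=20$, even the triangle counts of all nine links coincide. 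So no degree-based or $3$-cycle invariant can separate the links, and the paper's stated verification cannot establish the result as described; presumably the authors in fact relied on full isomorphism testing as in their Algorithm~1, but the proof text does not say so. Your finer second-order signature (profiles of $4$-vertex subtournaments or one round of color refinement, backed by VF2 isomorphism tests) is exactly what a rigorous verification requires, so your version is the one that actually closes the argument; what it costs is conceding that the certificate is inherently computational rather than checkable by hand from a degree table.
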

\begin{proof}
    We construct an explicit example of such a digraph. Consider the tournament $D_9$ on vertex set $V = \{0, 1, 2, 3, 4, 5, 6, 7, 8\}$ with edge set as depicted in Figure~\ref{fig:k9_tournament}. By construction, each vertex has outdegree 4 and indegree 4, making this a regular tournament. To verify link-irregularity, we compute the link-outdegree sequence. A direct calculation shows that no two vertices share the same multiset of neighbor degrees, confirming that $D_9$ is link-irregular. Therefore, there exists a link-irregular regular tournament on 9 vertices.
\end{proof}

\begin{figure}[H]
		\centering
		\begin{tikzpicture}[scale=1.5, baseline=(current bounding box.north),
			midarrow/.style={decoration={markings, mark=at position 0.5 with {\arrow{Stealth[length=2.5mm, width=1.5mm]}}}, postaction={decorate}}]
			
			\foreach \i in {0,...,8} {
				\node[circle, fill=black!50, inner sep=3pt, label={90+40*(0-\i)}:\i] (v\i) at ({90+40*(0-\i)}:2cm) {};
			}
			
			\draw[thick, midarrow] (v0) to[bend right=10] (v2);
			\draw[thick, midarrow] (v0) to[bend right=10] (v3);
			\draw[thick, midarrow] (v0) to[bend right=10] (v4);
			\draw[thick, midarrow] (v0) to[bend right=10] (v7);
			
			\draw[thick, midarrow] (v1) to[bend right=10] (v0);
			\draw[thick, midarrow] (v1) to[bend right=10] (v2);
			\draw[thick, midarrow] (v1) to[bend right=10] (v5);
			\draw[thick, midarrow] (v1) to[bend right=10] (v7);
			
			\draw[thick, midarrow] (v2) to[bend right=10] (v4);
			\draw[thick, midarrow] (v2) to[bend right=10] (v5);
			\draw[thick, midarrow] (v2) to[bend right=10] (v6);
			\draw[thick, midarrow] (v2) to[bend right=10] (v7);
			
			\draw[thick, midarrow] (v3) to[bend right=10] (v1);
			\draw[thick, midarrow] (v3) to[bend right=10] (v2);
			\draw[thick, midarrow] (v3) to[bend right=10] (v6);
			\draw[thick, midarrow] (v3) to[bend right=10] (v8);
			
			\draw[thick, midarrow] (v4) to[bend right=10] (v1);
			\draw[thick, midarrow] (v4) to[bend right=10] (v3);
			\draw[thick, midarrow] (v4) to[bend right=10] (v6);
			\draw[thick, midarrow] (v4) to[bend right=10] (v7);
			
			\draw[thick, midarrow] (v5) to[bend right=10] (v0);
			\draw[thick, midarrow] (v5) to[bend right=10] (v3);
			\draw[thick, midarrow] (v5) to[bend right=10] (v4);
			\draw[thick, midarrow] (v5) to[bend right=10] (v8);
			
			\draw[thick, midarrow] (v6) to[bend right=10] (v0);
			\draw[thick, midarrow] (v6) to[bend right=10] (v1);
			\draw[thick, midarrow] (v6) to[bend right=10] (v5);
			\draw[thick, midarrow] (v6) to[bend right=10] (v8);
			
			\draw[thick, midarrow] (v7) to[bend right=10] (v3);
			\draw[thick, midarrow] (v7) to[bend right=10] (v5);
			\draw[thick, midarrow] (v7) to[bend right=10] (v6);
			\draw[thick, midarrow] (v7) to[bend right=10] (v8);
			
			\draw[thick, midarrow] (v8) to[bend right=10] (v0);
			\draw[thick, midarrow] (v8) to[bend right=10] (v1);
			\draw[thick, midarrow] (v8) to[bend right=10] (v2);
			\draw[thick, midarrow] (v8) to[bend right=10] (v4);
			
		\end{tikzpicture}
		\caption{Link-irregular regular orientation of $K_9$.}
		\label{fig:k9_tournament}
	\end{figure}
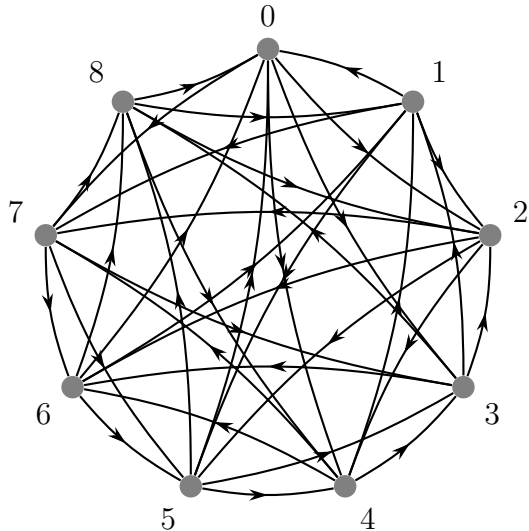


The edge sets for the digraphs in Figures \ref{fig:d6_2out} and \ref{fig:k9_tournament}  are available in \texttt{Digraph\_examples\_edgesets.txt} in the repository\footnote{\url{https://github.com/omidkhormali/Link-irregular-Digraphs}}.\\

We now establish lower bounds on the degree and outdegree that must exist in any link-irregular digraph, which provides the necessary conditions for 
link-irregularity.

\begin{theorem}
    Any link irregular digraph must have: 
    \begin{itemize}
        \item A vertex $u$ such that $d(u) \geq h - \sum_{d=1}^{h-1}\frac{h-d}{n}2^{2\binom{d}{2}}$, and
        \item A vertex $w$ such that $d^+(w) \geq \frac{1}{2}(h - \sum_{d=1}^{h-1}\frac{h-d}{n}2^{2\binom{d}{2}})$.
    \end{itemize}
    where $h = \lfloor \frac{1+\sqrt{1+4\log{n}}}{2}\rfloor$
\end{theorem}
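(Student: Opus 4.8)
The plan is to rerun the counting argument of Theorem~\ref{lbound}, but instead of extracting only an asymptotic edge bound from the degree sum, I would convert that degree-sum estimate into a statement about a single extremal vertex via averaging. The structural input is exactly the fact underlying Theorem~\ref{lbound}: in a link-irregular digraph a vertex $v$ of degree $d = |N^+(v)\cup N^-(v)|$ has a directed link on precisely $d$ vertices, and since there are at most $2^{2\binom{d}{2}}$ isomorphism classes of digraphs on $d$ vertices, at most $2^{2\binom{d}{2}}$ vertices can have degree $d$. Before using this I would calibrate $h$: the floor expression is engineered to be the largest integer with $h(h-1)\leq \log n$, equivalently $2\binom{h}{2}\leq \log n$, equivalently $2^{2\binom{h}{2}}\leq n$ (base-$2$ logarithm). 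This is the feature the argument needs, since it forces $\sum_{d=1}^{h-1}2^{2\binom{d}{2}}<n$ (the low-degree capacities are dominated by their top term, which is already at most $n$, just as in the ``$nk$ dominates'' step of Theorem~\ref{lbound}).

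Next I would bound the degree sum from below. To minimize $\sum_v d(v)$ subject to the constraint that at most $2^{2\binom{d}{2}}$ vertices have any given degree $d$, one saturates every degree $d\in\{1,\dots,h-1\}$ and forces the remaining $n-\sum_{d=1}^{h-1}2^{2\binom{d}{2}}$ vertices to have degree at least $h$; feasibility of this greedy distribution is exactly what the calibration of $h$ guarantees. This yields
\[
\sum_v d(v) \;\geq\; \sum_{d=1}^{h-1} d\,2^{2\binom{d}{2}} + h\Big(n-\sum_{d=1}^{h-1}2^{2\binom{d}{2}}\Big) \;=\; hn-\sum_{d=1}^{h-1}(h-d)2^{2\binom{d}{2}}.
\]
The first bullet then follows by averaging: some vertex $u$ satisfies $d(u)\geq \tfrac{1}{n}\sum_v d(v)\geq h-\sum_{d=1}^{h-1}\tfrac{h-d}{n}2^{2\binom{d}{2}}$.

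For the second bullet I would pass from total degree to outdegree by a global sum rather than by examining $u$ alone. Since $d^+(v)+d^-(v)=|N^+(v)|+|N^-(v)|\geq |N^+(v)\cup N^-(v)|=d(v)$ for every $v$, and since $\sum_v d^+(v)=\sum_v d^-(v)=|A|$, we obtain $2\sum_v d^+(v)\geq \sum_v d(v)$. Hence the average outdegree is at least half the quantity bounded above, and some vertex $w$ attains at least this average, giving $d^+(w)\geq \tfrac{1}{2}\big(h-\sum_{d=1}^{h-1}\tfrac{h-d}{n}2^{2\binom{d}{2}}\big)$. (Working with the total outdegree sum is important here: the high-degree vertex $u$ from the first bullet need not itself have large \emph{out}degree, whereas the averaging argument produces a vertex that does.)

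The steps are routine once the counting lemma and the calibration of $h$ are in place, so I expect the only genuinely delicate points to be bookkeeping rather than ideas. First, one must confirm that the floor formula really encodes $2^{2\binom{h}{2}}\leq n$ and that this makes the greedy minimization feasible, i.e.\ $\sum_{d=1}^{h-1}2^{2\binom{d}{2}}\leq n$; this is the analogue of the domination estimate already carried out in Theorem~\ref{lbound}. Second, there is the minor matter of isolated vertices (degree $0$), of which a link-irregular digraph can contain at most one since all empty links are isomorphic; this is why the sums start at $d=1$, and as in Theorem~\ref{lbound} the single degree-$0$ vertex contributes negligibly and does not affect the stated bounds.
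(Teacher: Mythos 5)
Your proposal follows essentially the same route as the paper's proof: the same count of at most $2^{2\binom{d}{2}}$ vertices per degree $d$, the same calibration of $h$ as the largest integer with $2^{2\binom{h}{2}} \leq n$ (yielding the floor formula via the quadratic $h^2 - h - \log n \leq 0$), the same degree-sum lower bound $\sum_v d(v) \geq hn - \sum_{d=1}^{h-1}(h-d)2^{2\binom{d}{2}}$, and the same averaging/pigeonhole step for both bullets. The one small divergence is that you pass to outdegrees via the inequality $2\sum_v d^+(v) \geq \sum_v d(v)$ while the paper asserts the equality $\sum_v d^+(v) = \tfrac{1}{2}\sum_v d(v)$ (which requires the digraph to have no digons); your version is the more careful of the two and yields the identical conclusion.
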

\begin{proof}
    We saw in Theorem \ref{lbound} that 
    \[\sum d(v) \geq  hn - \sum_{d = 1}^{h-1}(h-d)2^{2\binom{d}{2}}\]
    where $h$ is the maximum integer satisfying $2^{2\binom{h}{2}} \leq n$, $n$ the number of vertices in our digraph. Taking the log of both sides of this inequality, we obtain $2\binom{h}{2} = h^2-h \leq \log{n}$ which is equivalent to $h^2-h-\log{n} \leq 0$. Setting the left-hand side of this inequality equal to zero, we obtain $h = \frac{1 \pm \sqrt{1 + 4\log{n}}}{2}$. Since we wish to maximize $h$, and $h$ must be an integer, we have 
    \[h = \lfloor \frac{1+\sqrt{1+4\log{n}}}{2}\rfloor \].
    Using the first inequality we took from Theorem \ref{lbound}, we divide through by $n$ to obtain 
    \[\frac{1}{n}\sum d(v) \geq  h - \sum_{d = 1}^{h-1}\frac{h-d}{n}2^{2\binom{d}{2}}\].
    By the pigeonhole principle, we have the first statement of our theorem.
    To obtain the second statement of our theorem, we observe that $\sum d^+(v) = \frac{1}{2}\sum d(v)$. Then, dividing through our earlier equation by two, we obtain
    \[\frac{1}{n}\sum d^+(v) = \frac{1}{2n}\sum d(v) \geq  \frac{1}{2}(h - \sum_{d = 1}^{h-1}\frac{h-d}{n}2^{2\binom{d}{2}}).\]
    As before, by the pigeonhole principle, we have the second statement of our theorem.
\end{proof}

An immediate consequence of this result is the following corollary.
\ \\
\begin{cor}
    Any digraph in which:
    \begin{itemize}
        \item All vertices have a degree less than $h - \sum_{d=1}^{h-1}\frac{h-d}{n}2^{2\binom{d}{2}}$, or
        \item All vertices have an outdegree less than $\frac{1}{2}(h - \sum_{d=1}^{h-1}\frac{h-d}{n}2^{2\binom{d}{2}})$
    \end{itemize}
    is not link-irregular.
\end{cor}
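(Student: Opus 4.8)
The plan is to prove this corollary as the direct contrapositive of the preceding theorem, so essentially no new work is needed beyond carefully matching each bullet to one of the two necessary conditions established there. Recall that the theorem asserts that \emph{every} link-irregular digraph on $n$ vertices must simultaneously contain a vertex $u$ with $d(u) \geq h - \sum_{d=1}^{h-1}\frac{h-d}{n}2^{2\binom{d}{2}}$ and a vertex $w$ with $d^+(w) \geq \frac{1}{2}(h - \sum_{d=1}^{h-1}\frac{h-d}{n}2^{2\binom{d}{2}})$, where $h = \lfloor \frac{1+\sqrt{1+4\log n}}{2}\rfloor$. The negation of each of these existence statements is precisely a uniform degree (respectively outdegree) cap across all vertices, which is exactly the hypothesis of each bullet.

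First I would handle the degree bullet. Suppose $D$ is a digraph in which every vertex has degree strictly less than $h - \sum_{d=1}^{h-1}\frac{h-d}{n}2^{2\binom{d}{2}}$. Then $D$ contains no vertex $u$ satisfying $d(u) \geq h - \sum_{d=1}^{h-1}\frac{h-d}{n}2^{2\binom{d}{2}}$, so the first necessary condition of the theorem fails, and hence $D$ cannot be link-irregular. Second, I would handle the outdegree bullet by the identical argument: if every vertex has outdegree strictly below $\frac{1}{2}(h - \sum_{d=1}^{h-1}\frac{h-d}{n}2^{2\binom{d}{2}})$, then no vertex $w$ satisfies $d^+(w) \geq \frac{1}{2}(h - \sum_{d=1}^{h-1}\frac{h-d}{n}2^{2\binom{d}{2}})$, violating the second necessary condition, so again $D$ is not link-irregular. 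Because the two conditions in the theorem are each independently necessary, either bullet alone suffices to rule out link-irregularity, which justifies the disjunctive (``or'') phrasing in the statement.

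There is no substantive obstacle here; the result is purely logical once the theorem is in hand. The only point requiring a moment's care is the direction of the strict versus non-strict inequalities: the theorem guarantees a vertex \emph{meeting or exceeding} the threshold, so its correct negation is that \emph{all} vertices fall \emph{strictly below} that threshold, which matches the corollary's hypothesis precisely. I would also note explicitly that the value of $h$ and the convention $\binom{d}{2}=0$ for $d<2$ are inherited unchanged from the theorem, so no recomputation of the bound is required, and the proof reduces to the two short contrapositive deductions above.
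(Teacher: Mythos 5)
Your proposal is correct and matches the paper exactly: the paper states this corollary as an immediate consequence of the preceding theorem with no written proof, and the intended justification is precisely the contrapositive reasoning you spelled out. Your explicit treatment of the strict/non-strict inequality negation and the observation that each necessary condition independently rules out link-irregularity is exactly what the paper leaves implicit.
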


Then in any link-irregular digraph with constant outdegree, all vertices must have an outdegree greater than or equal to $\frac{1}{2}(h - \sum_{d=1}^{h-1}\frac{h-d}{n}2^{2\binom{d}{2}})$. That is, 
a link-irregular digraph on $n$ vertices with constant outdegree $k$ must satisfy $k \geq \frac{1}{2}(h - \sum_{d=1}^{h-1}\frac{h-d}{n}2^{2\binom{d}{2}})$ where $h = \lfloor \frac{1+\sqrt{1+4\log{n}}}{2}\rfloor$.

\section*{Acknowledgments}
This research was supported by the UExplore Undergraduate Research Program at the University of Evansville.

\end{document}